\documentclass[11pt]{amsart}

\usepackage{amsmath,amsthm,amssymb,tikz-cd,bm,mathdots, url}

\usepackage[margin=1in]{geometry}

\newtheorem{theorem}{Theorem}[section]					
\newtheorem{lemma}[theorem]{Lemma}
\newtheorem{corollary}[theorem]{Corollary}
\newtheorem{proposition}[theorem]{Proposition}
\theoremstyle{definition}
\newtheorem{definition}[theorem]{Definition}
\theoremstyle{definition}

\newcommand{\Z}{\mathbb{Z}}
\newcommand{\Q}{\mathbb{Q}}

\newcommand{\C}{\mathbb{C}}
\newcommand{\F}{\mathbb{F}}
\newcommand{\K}{\mathbb{K}}
							
\newcommand{\OO}{\mathcal{O}}

\newcommand{\bG}{\mathbf{G}}

\newcommand{\GL}{\operatorname{GL}}						
\newcommand{\SL}{\operatorname{SL}}

\newcommand{\PSL}{\operatorname{PSL}}
\newcommand{\POmega}{\operatorname{P\Omega}}
\newcommand{\GU}{\operatorname{GU}}
\newcommand{\SU}{\operatorname{SU}}

\newcommand{\PSU}{\operatorname{PSU}}
\newcommand{\Sp}{\operatorname{Sp}}
\newcommand{\PSp}{\operatorname{PSp}}

\newcommand{\SO}{\operatorname{SO}}
\newcommand{\Spin}{\operatorname{Spin}}
\newcommand{\type}{\operatorname}

\newcommand{\tw}[1]{{}^#1}

\newcommand{\Bl}{\operatorname{Bl}}

\newcommand{\ch}{\operatorname{char}}

\newcommand{\diag}{\operatorname{diag}}

\newcommand{\Hom}{\operatorname{Hom}}

\newcommand{\id}{\operatorname{id}}

\newcommand{\into}{\hookrightarrow}
\newcommand{\Irr}{\operatorname{Irr}}
\newcommand{\iso}{\cong}

\newcommand{\isoto}{\overset{\sim}{\to}}

\newcommand{\Lin}{\operatorname{Lin}}

\newcommand{\nor}{\trianglelefteq}

\newcommand{\onto}{\twoheadrightarrow}

\newcommand{\Res}{\operatorname{Res}}

\newcommand{\set}[1]{\left\{#1\right\}}

\newcommand{\subgp}{\leq}

\newcommand{\tr}{\operatorname{tr}}

	\makeatletter
	\newcommand{\tpitchfork}{%
		\vbox{
			\baselineskip\z@skip
			\lineskip-.52ex
			\lineskiplimit\maxdimen
			\m@th
			\ialign{##\crcr\hidewidth\smash{$-$}\hidewidth\crcr$\pitchfork$\crcr}
		}%
	}
	\makeatother

\usepackage{hyperref}
\title{Real $2$-blocks in quasi-simple groups}

\author[J. R. McHugh]{John Revere McHugh}
\address[McHugh]{Department of Mathematics, University of Denver, Denver, CO 80210, USA}
\email{John.R.McHugh@du.edu}
\author[A. A. Schaeffer Fry]{A. A. Schaeffer Fry}
\address[Schaeffer Fry]{Department of Mathematics, University of Denver, Denver, CO 80210, USA}
\email{mandi.schaefferfry@du.edu}

\thanks{We thank John Murray for suggesting this question and for valuable discussions on the topic. The second author is grateful for support from the U.S. National Science Foundation, Award No. DMS-2439897}

\begin{document}

\begin{abstract}
   We determine which quasi-simple groups have a non-principal $2$-block that is stable under complex conjugation. As a corollary, we determine that the Mathieu group $M_{22}$ is the only simple group not possessing a nontrivial irreducible Brauer character of quadratic type, answering a recent question of Gow and Murray.
\end{abstract}

\maketitle
\section{Introduction}

The study of reality properties of representations and characters of finite groups has been a prolific area of research since the early years of character theory. The characters and Brauer characters corresponding to irreducible self-dual modules, which we will call self-dual (Brauer) characters, are exactly those irreducible (Brauer) characters that are stable under complex conjugation, relating this study to that of self-dual modules.

Motivated by the computation of decomposition numbers, Fong showed in \cite[Lem.~1]{fong74} that any nontrivial irreducible self-dual $FG$-module for a finite group $G$ and perfect field $F$ of characteristic $2$ necessarily admits a nondegenerate alternating bilinear form. This began the study of whether, further, for such a $V$ there is a nondegenerate $G$-invariant quadratic form. If $V$ is equipped with a nondegenerate $G$-invariant quadratic form, $V$ is said to be of \emph{quadratic type}. 
Modules and Brauer characters of quadratic type have been highly studied, for example in \cite{GowWillems93, GowWillems95, gowwillems97, murraynavarro, GowMurray}. 

This paper is motivated by a question of Gow and Murray on the topic. Namely, in  \cite[Sec. 5]{GowMurray}, Gow and Murray ask whether each nonabelian simple group aside from the Mathieu group $M_{22}$ has some nontrivial, quadratic-type Brauer character. 
By \cite[Prop 1.4]{GowWillems93}, any real $2$-block $B$ (that is, one stable under complex conjugation - see \ref{sec:blocks} below) of a finite group contains a self-dual Brauer character $\psi$. Then  by \cite[Lem.~17]{GowMurray}, $\psi$ is of quadratic type unless possibly if $B$ is the principal block. Hence if $G$ has a non-principal real $2$-block, then there exists a nontrivial Brauer character of quadratic type. 

This naturally leads to the question of which groups possess a non-principal real $2$-block. 
We answer this question for nonabelian (quasi)-simple groups:

\begin{theorem}\label{thm:real2blocks}
    Let $G$ be a finite quasi-simple group. Then $G$ has a non-principal real $2$-block if and only if $G/Z(G)$ is not one of $M_{11}$, $M_{22}$, $M_{23}$,  $M_{24}$, $\PSL_3(3)$, or $\PSU_3(3)$.
\end{theorem}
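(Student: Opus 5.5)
We will prove Theorem~\ref{thm:real2blocks} by working through the classification of finite simple groups, treating $S=G/Z(G)$ family by family. First a reduction. Let $Z=Z(G)$ and write $Z=Z_2\times Z_{2'}$. Blocks of $G$ lying over the trivial character of $Z_{2'}$ are blocks of $G/Z_{2'}$. Since $Z_2\le O_2(G)$ acts trivially on every block, the $2$-blocks of $G/Z_{2'}$ are in natural bijection with those of $S$. This bijection respects complex conjugation and sends principal to principal. So a non-principal real $2$-block of $S$ gives one of $G$. Conversely, a non-principal real block of $G$ either comes from $S$ in this way, or lies over a faithful linear character $\lambda$ of $Z_{2'}$ with $\bar\lambda=\lambda^{-1}\neq\lambda$. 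The latter is impossible: a real block must lie over a real character of $Z_{2'}$, and $Z_{2'}$ has odd order. Hence it suffices to prove the statement for $G=S$ simple.

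We will use the standard criterion that a $2$-block $B$ is real if and only if it contains a real-valued ordinary irreducible character, or equivalently a real defect class. In particular, a real irreducible character of $2$-defect zero gives a non-principal real block. The plan is as follows.

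\textbf{Sporadic, alternating, and small exceptional groups.} For sporadic groups and the finitely many small cases, we will inspect the known $2$-block distributions (Atlas / GAP character tables). For $M_{11},M_{22},M_{23},M_{24},\PSL_3(3),\PSU_3(3)$ we will check that every non-principal $2$-block consists of non-real characters. Typically each such block has defect zero and contains a pair of complex-conjugate characters, for instance the degree $16$ characters of $M_{11}$ or the $10,\overline{10}$ of $M_{22}$. For every other sporadic group we will exhibit a real character in a non-principal block.

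For $\mathfrak A_n$ we will use the Nakayama conjecture together with the reality of characters of $\mathfrak S_n$. Choose a $2$-core $\kappa\neq$ the core of the trivial partition. Blocks of $\mathfrak S_n$ with that core restrict to $\mathfrak A_n$, and a non-self-conjugate partition yields a real character of $\mathfrak A_n$. Cores are staircases, which are self-conjugate, but most characters in a block of positive weight are non-self-conjugate. Small $n$ will be checked directly.

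\textbf{Groups of Lie type.} For $S$ in odd characteristic $p$ we will use Broué--Michel/Cabanes--Enguehard: $2$-blocks are unions of $\mathcal E_2(G,s)$ with $s$ a semisimple $2'$-element of $G^*$. The block is real when $s$ is conjugate to $s^{-1}$. We will therefore seek a non-trivial real semisimple $2'$-element $s$ of odd order in $G^*$, e.g.\ in a suitable maximal torus normalized by an element inverting it. Taking $s$ regular in a Coxeter-type torus, the Deligne--Lusztig character $\pm R_T^G(\hat s)$ lies in a non-principal block. We must check that it survives passage through the simply-connected cover/adjoint quotient (via the center argument above).

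In defining characteristic $2$, the $2$-blocks are the principal block and defect-zero blocks, namely the Steinberg character, which is rational. So we need a non-principal block, and the Steinberg character lies in its own defect-zero block and is real. This settles all defining characteristic $2$ groups immediately, including $\PSL_3(4)$ etc., apart from the excluded groups, which are not in characteristic $2$.

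\textbf{Expected main obstacle.} The main obstacle is odd characteristic groups of small rank and small $q$. Here real odd-order semisimple elements might not exist in $G^*$, or the relevant unions of Lusztig series might fuse into the principal block; $\PSL_3(3)$ and $\PSU_3(3)$ are exactly such failures. We will first try, for $\PSL_n(q)$ and $\PSU_n(q)$, an element $s$ of odd prime order $r\mid q^{2}-1$ (or $q^d\pm1$) with eigenvalues closed under inversion. When no such $r$ exists (essentially $q+1$ and $q-1$ being $2$-powers with small $n$), we will fall back on unipotent blocks. We will use the combinatorial description of unipotent $2$-blocks by $e$-cuspidal pairs: non-principal unipotent blocks exist once some $d$-cuspidal unipotent character with $d$ the order of $q$ mod $4$ appears, and unipotent characters are real except for known exceptions. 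Finally, the remaining finitely many small cases will be settled by direct computation.
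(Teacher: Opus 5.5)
Your overall architecture matches the paper's: the classification, character tables for the sporadic and small cases, the Steinberg character in characteristic $2$, and real odd-order semisimple elements of the dual together with Brou\'e--Michel in odd characteristic.

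Your reduction from quasi-simple to simple groups is correct and improves on the paper. The paper proves only the easy direction, by domination, and then checks the covers of $M_{22}$ in GAP. Your argument gives both directions at once: a real block must lie over $1_{Z_{2'}}$, since every nontrivial character of the odd-order group $Z_{2'}$ is non-real, and blocks are in bijection across a central $2$-subgroup. Justify the bijection by the centrality of $Z_2$, not by $Z_2\le O_2(G)$. Across a non-central normal $2$-subgroup blocks can fuse: the unique $2$-block of $S_4$ dominates both $2$-blocks of $S_3$.

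The genuine gap is in odd characteristic. The assertion that ``the block is real when $s$ is conjugate to $s^{-1}$'' is not valid. Reality of $s$ only makes the union $\mathcal{E}_2(G,s)$ conjugation-stable, and the blocks inside it may be interchanged. You need an actual real-valued irreducible character in $\mathcal{E}_2(G,s)$.

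The paper uses the semisimple character $\chi_s$, which is real by Lemma~\ref{lem:Jdecompprops}; that lemma needs $C_{\bG^*}(s)^{F^*}\le C^\circ_{\bG^*}(s)$. Your substitute $\pm R_T^G(\hat s)$ also works, provided $s$ is regular with $C_{\bG^*}(s)=T^*$ connected. In that case an element inverting $s$ normalizes $T^*$, so $(T,\hat s)$ and $(T,\overline{\hat s})$ are $G$-conjugate. You do not make this argument. Also, Coxeter tori are the wrong choice in types $\type A_{n-1}$, $\tw2\type A_{n-1}$ with $n$ odd: the Coxeter torus of $\GL_n(\epsilon q)$ then contains no real elements other than $\pm1$.

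More seriously, connectivity is exactly what fails when $\bG$ is simply connected of type $\type A$ or $\type E_6$, so that $\bG^*$ is adjoint. Your ``center argument'' only compares a quasi-simple group with its central quotient, so it does not touch this problem. Concretely, in $\operatorname{PGL}_3(7)$ and $\operatorname{PGU}_3(5)$, every nontrivial real semisimple element of odd order has a preimage with eigenvalues $1,\omega,\omega^2$, where $\omega^3=1$. Such an element has $C(s)^F\not\le C^\circ(s)$. So neither $\chi_s$ nor $R_T^G(\hat s)$ gives anything for $\SL_3(7)$ or $\SU_3(5)$, and your plan does not single these out among the small cases.

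The paper gets around this in two ways:
\begin{itemize}
\item In type $\type A$ it works in $\GL_n(\epsilon q)$ and descends to $\SL_n$ and $\PSL_n$ via conditions $(\mathcal{B})$ and $(\mathcal{C})$ of Proposition~\ref{prop:main}, with GAP for $\SL_3(7)$ and $\SU_3(5)$.
\item For $\type E_6$ it passes through the adjoint group and an index-$3$ covering argument (Lemma~\ref{lem:f4e6}).
\end{itemize}

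A repair compatible with your approach in type $\type A$: for $s\in\GL_n(\epsilon q)$ real of odd order, the stabilizer of $\chi_s$ in $\Lin(\GL_n(\epsilon q)/\SL_n(\epsilon q))$ is $\{\hat z : sz\sim s\}$. This group has odd order, so $\Res\chi_s$ has an odd number of constituents, and complex conjugation permutes them. Hence one constituent is real. Its block is non-principal by the covering argument in Proposition~\ref{prop:main}(b), and your central reduction then passes to $\PSL_n(\epsilon q)$.

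Some smaller points:
\begin{itemize}
\item The unipotent fallback gives nothing. For $\ell=2$ and $q$ odd, all unipotent characters of a classical group lie in the principal $2$-block. The small-$q$ classical cases are instead handled by elements of order $5$ coming from $3^4-1$. The paper does this for $\GL_n(\pm3)$ with $n\ge4$, $\Sp_4(3)$, $\SO_5(3)$ and $\SO_8^-(3)$, and your ``$q^d\pm1$'' clause could supply these elements.
\item Use only the trivial direction of your reality criterion. For the six exceptions you must check that conjugation moves every non-principal block; these are defect-zero blocks with non-real characters, or there are none.
\item $M_{22}$ has no ordinary character of degree $10$.
\item A defect-zero block contains exactly one ordinary character, so it cannot contain a conjugate pair.
\end{itemize}
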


In \cite{GowMurray}, it is shown that the sporadic simple groups noted in Theorem \ref{thm:real2blocks} contain an irreducible Brauer character of quadratic type except for $M_{22}$.
As a corollary, we may answer the question of Gow and Murray:
\begin{corollary}\label{cor:quadraticselfdual}
    Let $G$ be a finite nonabelian simple group. Then $G$ has a nontrivial irreducible Brauer character of quadratic type if and only if $G$ is not $M_{22}$.
\end{corollary}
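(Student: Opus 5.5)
The statement to prove is Corollary~\ref{cor:quadraticselfdual}. I would deduce it from Theorem~\ref{thm:real2blocks} together with the results of Gow--Murray and Gow--Willems recalled in the introduction. Let $G$ be a nonabelian simple group, so $G$ is quasi-simple with $Z(G)=1$.

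First suppose $G$ is not one of $M_{11}$, $M_{22}$, $M_{23}$, $M_{24}$, $\PSL_3(3)$, $\PSU_3(3)$. Then Theorem~\ref{thm:real2blocks} gives a non-principal real $2$-block $B$ of $G$. By \cite[Prop 1.4]{GowWillems93}, $B$ contains a self-dual irreducible Brauer character $\psi$. Since $B$ is not the principal block, $\psi$ is nontrivial, and \cite[Lem.~17]{GowMurray} shows that $\psi$ is of quadratic type.

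Next take $G\in\{M_{11},M_{23},M_{24}\}$. Here \cite{GowMurray} already exhibits a nontrivial irreducible Brauer character of quadratic type, as noted after Theorem~\ref{thm:real2blocks}. For $M_{22}$, the same paper shows that no such character exists, which gives the ``only if'' direction.

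The remaining cases $\PSL_3(3)$ and $\PSU_3(3)$ are the main obstacle. They are not covered by the non-principal block argument, and the introduction does not say that \cite{GowMurray} treats them. I would handle them directly from the $2$-modular character tables. For $\PSL_3(3)\iso\SL_3(3)$, the natural module over $\F_3$ lies in characteristic $3$, so it is not relevant. Instead I would look for a self-dual nontrivial irreducible $2$-Brauer character in the principal block, for example of degree $12$ or $26$, and show it is of quadratic type.

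The tool for that would be the standard criterion. Take a self-dual irreducible module $V$ in characteristic $2$ that lifts to, or occurs with odd multiplicity in the reduction of, an ordinary character $\chi$ of Frobenius--Schur indicator $+1$. Such a $V$ is of quadratic type, for instance by the Gow--Willems criteria \cite{GowWillems93, GowWillems95}.

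An alternative for $\PSL_3(3)$ avoids table work. By Fong's lemma, every nontrivial self-dual $V$ carries a nondegenerate $G$-invariant alternating form. The form is quadratic-type exactly when $V$ lies in the image of $\Omega^\pm$. For a perfect group with no Sylow-$2$ obstruction, one can try to verify this with \cite[Lem.~17]{GowMurray}-type criteria applied to a suitable involution.

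I expect this verification for $\PSL_3(3)$ and $\PSU_3(3)$, using known decomposition matrices from the Modular Atlas, to be the only non-formal step.
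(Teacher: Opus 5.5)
Your route is the paper's. The paper also combines Theorem~\ref{thm:real2blocks} with \cite[Prop 1.4]{GowWillems93} and \cite[Lem.~17]{GowMurray} for the non-excluded groups, and cites \cite[Sec.~5]{GowMurray} for the Mathieu groups. For $\PSL_3(3)$ and $\PSU_3(3)$ it does a direct table check, exhibiting an ordinary character of indicator $+1$ (of degree $12$, resp.\ $14$) whose restriction to $2$-regular classes is irreducible.

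One correction is needed. Only that ``irreducible reduction'' form of your criterion is valid, and the ``odd multiplicity'' variant is false. For example, in $A_5\iso\SL_2(4)$ the orthogonal character of degree $3$ reduces to $1+\phi$, where $\phi$ is the Brauer character of the natural $\SL_2(4)$-module. That module is not of quadratic type: no nondegenerate quadratic form on $\F_4^2$ is invariant under all unitriangular matrices.

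Two smaller points. For $\PSU_3(3)$ the degree-$6$ character has indicator $-1$, so the orthogonal $14$ is the one to use. Your ``alternative'' paragraph about $\Omega^\pm$ and involutions is not a usable argument and can be dropped.
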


Some of the key results we use are those of Fong--Srinivasan \cite{FS82} describing the blocks of general linear and unitary groups, which were extended in \cite{Broue_1986}, as well as those of Srinivasan--Vinroot \cite{Srinivasan_2015, SV20} on the compatibility of Lusztig's  Jordan decomposition of characters with respect to Galois action, which were extended in \cite{Schaeffer_Fry_2025}. 

The paper is structured as follows. In Section \ref{sec:preliminaries}, we introduce several preliminary notions on characters and blocks of finite groups and of groups of Lie type that will be useful for the remainder of the paper. In Section \ref{sec:othergroups}, we discuss sporadic groups, alternating groups, and groups of Lie type defined in characteristic $2$, before moving in Section \ref{sec:Lietype} to our main considerations: groups of Lie type defined in odd characteristic. Finally, the proofs of Theorem \ref{thm:real2blocks} and Corollary \ref{cor:quadraticselfdual} are completed in Section \ref{sec:mainproofs}.

\section{Preliminaries}\label{sec:preliminaries}

We begin with some preliminaries that will be useful throughout. 

\subsection{Group-Theoretic Notation}

	Let $G$ be a group. If $g,h\in G$ then we set ${}^gh:=ghg^{-1}$, and if $H\subgp G$ is a subgroup of $G$ then ${}^gH:=gHg^{-1}$. We write $H\nor G$ if $H$ is a normal subgroup of $G$. The center of $G$ is denoted $Z(G)$, and the derived subgroup is denoted $G_{\text{der}}$. The normalizer (respectively, centralizer) of a subgroup $H$ of $G$ is denoted $N_G(H)$ (resp., $C_G(H)$).
	
	Let $g\in G$ be an element of finite order. If $\ell$ is a prime number, then $g$ is an $\ell'$-\textit{element} if $(|g|,\ell)=1$, i.e., if the order of $g$ is prime to $\ell$. If $|g|$ is a power of $\ell$ then $g$ is an $\ell$-\textit{element}. So, for example, the identity of $G$ is the only element of $G$ which is both an $\ell$-element and an $\ell'$-element. An arbitrary element $g\in G$ of finite order can be expressed uniquely as a product of commuting elements $g=g_{\ell}g_{\ell'}=g_{\ell'}g_{\ell}$ such that $g_{\ell}\in G$ is an $\ell$-element and $g_{\ell'}\in G$ is an $\ell'$-element. 

    Recall that an element $g\in G$ is \textit{real} if $g$ is $G$-conjugate to $g^{-1}$.

\subsection{Characters and Blocks}
We next recall some generalities on the character and block theories of finite groups. Let $G$ be a finite group, let $\ell$ be a prime number, and let $\OO$ be a complete discrete valuation ring whose field of fractions $\K$ has characteristic 0 and whose residue field $\OO/J(\OO)$ has characteristic $\ell$. Assume that $\K$ is ``large enough'' for $G$, which occurs e.g. when $\K$ contains a root of unity of order equal to the exponent of $G$. Also, let $\K_0$ denote the algebraic closure of $\Q$ in $\K$ and set $\OO_0:=\K_0\cap\OO$.

\subsubsection{Irreducible Characters}\label{irredchars}
	Let $\Irr_{\K}(G)$ denote the set of irreducible $\K$-characters of $G$. If $\chi\in\Irr_{\K}(G)$ then $\chi$ takes values in $\K_0$. After fixing an identification of $\K_0$ with a subfield of the complex numbers, we may thus view $\chi$ as a $\C$-character of $G$. Because $\K$ is large enough, in this way we can and will identify $\Irr_{\K}(G)$ with the set $\Irr(G)$ of irreducible (complex) characters of $G$.
	
	We use standard character-theoretic notation throughout this note. The principal character of $G$ is denoted $1_G$. The group of linear characters of $G$, which are the characters $\lambda\in\Irr(G)$ satisfying $\lambda(1)=1$, is denoted $\Lin(G)$. Recall that $\Lin(G)$ acts on $\Irr(G)$ by pointwise multiplication. If $H\subgp G$ and $\chi$ is a character of $G$, then $\Res_H^G(\chi)$ denotes the restriction of $\chi$ to $H$. If $g\in G$ and $\theta$ is a character of $H$, then ${}^g\theta$ denotes the character of ${}^gH$ defined by ${}^g\theta({}^gh)=\theta(h)$ for all $h\in H$.

\subsubsection{Real Irreducible Characters}
	If $\xi\in\C$, then $\overline{\xi}$ denotes the complex conjugate of $\xi$. We may assume without loss that the subfield $\K_0\subseteq\C$ is closed under complex conjugation. Then the \textit{complex conjugate} of an irreducible character $\chi$ of $G$ is the irreducible character $\overline{\chi}$ defined by $\overline{\chi}(g)=\overline{\chi(g)}$ for all $g\in G$. It is well-known that $\overline{\chi}(g)=\chi(g^{-1})$ for any $\chi\in\Irr(G)$ and any $g\in G$. 
	
	A character $\chi\in\Irr(G)$ is called \textit{real} if it satisfies $\chi=\overline{\chi}$, which occurs if and only if $\chi$ is real-valued. We remark that if $M$ is an irreducible $\K G$-module affording the character $\chi$, then $\overline{\chi}$ is afforded by the dual $\K G$-module $\Hom_{\K}(M,\K)$. For this reason, real characters of $G$ are also known in the literature as \textit{self-dual} characters.

\subsubsection{Blocks}\label{sec:blocks}
    The primitive idempotents of the center $Z(\OO G)$ of the group algebra $\OO G$ are called the $\ell$\textit{-blocks} (or simply the \textit{blocks}) of $\OO G$. The set $\Bl(\OO G)$ of blocks of $\OO G$ is finite, and we have \[1_{\OO G}=\sum_{b\in\Bl(\OO G)}b.\]

    Let $\chi\in\Irr_{\K}(G)$. By abuse of notation, the $\K$-linear extension of $\chi$ to a function defined on all of $\K G$ is again denoted $\chi$. If $b\in\Bl(\OO G)$, then $\chi$ \textit{belongs to} $b$ if $\chi(gb)=\chi(g)$ for all $g\in G$. The set of irreducible characters that belong to $b$ is denoted $\Irr(b)$. Recall that the subsets $\Irr(b)$, $b\in\Bl(\OO G)$, form a partition of $\Irr_{\K}(G)$. The unique block $b_0$ to which the principal character $1_G$ belongs is called the \textit{principal block} of $\OO G$.
     
	It is well-known (cf. \cite[Cor.~6.5.5]{Linckelmann_2018_2}) that any block of $\OO G$ can be expressed as an $\OO_0$-linear combination of the elements of $G$. Thus, if $b=\sum_{x\in G}\lambda_xx$ is a block of $\OO G$ (where $\lambda_x\in\OO_0$ for each $x\in G$) then applying complex conjugation to each of the coefficients $\lambda_x$ yields a block $\overline{b}:=\sum_{x\in G}\overline{\lambda_x}x$ of $\OO G$. The map $b\mapsto\overline{b}$ therefore defines a permutation of $\Bl(\OO G)$. A block $b$ of $\OO G$ is called \textit{real} if $b=\overline{b}$.
	
	If $b$ is a block of $\OO G$ and $\chi\in\Irr(b)$, then the complex conjugate character $\overline{\chi}$ belongs to the block $\overline{b}$. Complex conjugation thus defines a permutation of the subsets $\Irr(b)$, $b\in\Bl(\OO G)$. The following lemma is immediate from the definitions above.

\begin{lemma}\label{lem:realblocksfingrps}
	Let $b$ be a block of $\OO G$. Then $b$ is real if and only if $\Irr(b)$ is stable under taking complex conjugates. In particular, if there exists a real-valued character $\chi\in\Irr(b)$, then $b$ is real. Since the principal character $1_G$ is real-valued, the principal block $b_0$ of $\OO G$ is real.
\end{lemma}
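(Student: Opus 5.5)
The plan is to use the two descriptions of a block given above: the block idempotent $b$ in $Z(\OO G)$, and the characters it acts on as the identity, $\Irr(b)$. The standard tool linking them is the central primitive idempotent formula. For $\chi\in\Irr(G)$ let
\[
e_\chi=\frac{\chi(1)}{|G|}\sum_{g\in G}\chi(g^{-1})g
\]
be the central primitive idempotent of $\K G$. Then every block satisfies $b=\sum_{\chi\in\Irr(b)}e_\chi$. This is the usual characterization of blocks as sums of the $e_\chi$ over the characters $\chi$ with $\chi(b)\neq 0$, equivalently $\chi(gb)=\chi(g)$; the partition statement recalled in \S\ref{sec:blocks} already presupposes it.

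\textbf{Step 1: conjugation on the idempotents.} Apply the coefficientwise conjugation $x\mapsto\overline{x}$ (on $\OO_0$-combinations of group elements) to $e_\chi$. Since $\chi(1)/|G|$ is rational, we get $\overline{e_\chi}=\frac{\chi(1)}{|G|}\sum_g\overline{\chi(g^{-1})}\,g$. Using $\overline{\chi(g^{-1})}=\overline{\chi}(g^{-1})$ and $\overline{\chi}(1)=\chi(1)$, this equals $e_{\overline{\chi}}$. Summing over $\chi\in\Irr(b)$ then gives
\[
\overline{b}=\sum_{\chi\in\Irr(b)}e_{\overline{\chi}}.
\]
In particular this recovers the already-stated fact that $\Irr(\overline{b})=\{\overline{\chi}:\chi\in\Irr(b)\}$.

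\textbf{Step 2: the equivalence.} The $e_\chi$ are linearly independent; they are orthogonal idempotents. So two blocks are equal if and only if their sets of irreducible characters coincide. Hence $b=\overline{b}$ if and only if $\Irr(b)=\overline{\Irr(b)}$, which is exactly stability of $\Irr(b)$ under complex conjugation.

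\textbf{Step 3: the remaining two claims.} Suppose $\chi\in\Irr(b)$ is real-valued. Then $\chi=\overline{\chi}\in\Irr(\overline{b})\cap\Irr(b)$. Because the sets $\Irr(b')$ for $b'\in\Bl(\OO G)$ are pairwise disjoint, this forces $\overline{b}=b$. Finally, $1_G$ is real-valued and lies in $\Irr(b_0)$, so $b_0$ is real.

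The only step requiring any care is Step 1. It relies on the identity $b=\sum_{\chi\in\Irr(b)} e_\chi$, and on the fact that the coefficientwise conjugation on $\OO_0$-coefficients agrees with the one induced through $\K_0\subseteq\C$. The latter is guaranteed by the paper's convention that $\K_0$ is closed under complex conjugation, so I do not expect a real obstacle.
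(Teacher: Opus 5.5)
Your proof is correct and is essentially the paper's argument. The paper calls the lemma immediate from the preceding remarks that $b\mapsto\overline{b}$ permutes the blocks and that $\overline{\chi}\in\Irr(\overline{b})$ whenever $\chi\in\Irr(b)$. Your computation $\overline{e_\chi}=e_{\overline{\chi}}$ makes the second remark explicit, and Steps 2 and 3 are the same disjointness argument.

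One small precision: $\K_0$ being closed under conjugation does not by itself show that $\overline{b}$ lies in $\OO G$, because $\OO_0$ need not be conjugation-stable. The paper asserts this as part of its setup, so you may use it as well.
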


\subsubsection{Block Covering}
	Let $N\nor G$, $b\in\Bl(\OO G)$, and $c\in\Bl(\OO N)$. Recall that $b$ is said to \textit{cover} the block $c$ if there exists an irreducible character $\chi\in\Irr(b)$ such that $\Res_N^G(\chi)$ has an irreducible constituent belonging to $c$. In fact, a stronger statement holds in this case: if $b$ covers $c$, then $\Res_N^G(\chi)$ has an irreducible constituent belonging to $c$ for any $\chi\in\Irr(b)$ (cf. \cite[Lem.~ 5.5.7]{Nagao_1989}). For example, the principal block $b_0$ of $\OO G$ always covers the principal block $c_0$ of $\OO N$. However, there may be other blocks of $\OO G$ that cover the principal block of $\OO N$. The lemma below identifies a situation in which we can say precisely which blocks of $\OO G$ have this property.

\begin{lemma}\label{lem:blockscoveringprincderived}
	Let $N\nor G$ and assume that $G/N$ is abelian. Let $b, b'\in\Bl(\OO G)$ and $c\in\Bl(\OO N)$, and assume that $b'$ covers $c$. Then $b$ covers $c$ if and only if $\Irr(b)=\Irr(b')\cdot\lambda$ for some $\lambda\in\Lin(G)$ with $N$ in its kernel. In particular, if $c=c_0$ is the principal block of $\OO N$, then $b$ covers $c_0$ if and only if there exists a linear character $\lambda\in\Lin(G)$ such that $\lambda\in\Irr(b)$ and $N\subgp\ker(\lambda)$. Hence,
    if $N=G_{\text{der}}$ is the derived subgroup of $G$, then $b$ covers $c_0$ if and only if there exists a linear character $\lambda\in\Lin(G)$ such that $\lambda\in\Irr(b)$.
\end{lemma}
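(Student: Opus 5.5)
The plan is to reduce everything to Clifford theory for blocks over a normal subgroup with abelian quotient, using the covering fact quoted just before the lemma: if $b$ covers $c$, then every $\chi\in\Irr(b)$ has a constituent of $\Res_N^G(\chi)$ in $c$.

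First I would prove the "if" direction of the main equivalence. Suppose $\Irr(b)=\Irr(b')\cdot\lambda$ with $N\subgp\ker(\lambda)$. Take $\chi'\in\Irr(b')$. Then $\chi'\lambda\in\Irr(b)$, and $\Res_N^G(\chi'\lambda)=\Res_N^G(\chi')$ because $\lambda$ is trivial on $N$. Since $b'$ covers $c$, this restriction has a constituent in $c$, so $b$ covers $c$.

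For the "only if" direction I would proceed as follows.

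\begin{enumerate}
\item Fix $\theta\in\Irr(c)$. Take $\chi\in\Irr(b)$ and $\chi'\in\Irr(b')$, both lying over some constituent in $c$. The constituents of the restrictions lying in $c$ can be taken to be $\theta_1,\theta_2\in\Irr(c)$.
\item Since $G/N$ is abelian, I would invoke the standard result (Nagao--Tsushima, or Navarro's book, Thm.~9.2 / Cor.~9.6 type results) that the blocks of $G$ covering $c$ are permuted transitively by multiplication with linear characters of $G/N$. More precisely, multiplication by $\Lin(G/N)$ maps blocks to blocks: the map $\chi\mapsto\chi\lambda$ is induced by the $\OO$-algebra automorphism $g\mapsto\lambda(g)g$ of $\OO G$. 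This automorphism permutes central primitive idempotents, so $\Irr(b')\cdot\lambda=\Irr(b'')$ for some block $b''$.
\item The key point is transitivity. Blocks covering $c$ correspond to blocks covering the $G$-orbit of $c$, and all such blocks are $G$-conjugates of blocks of the stabilizer $G_c$ covering $c$, induced up (Fong--Reynolds). I would therefore reduce to $G=G_c$ via Fong--Reynolds, which is compatible with multiplication by linear characters since those restrict.
\item With $c$ now $G$-stable, consider $\chi,\chi'$ over $c$. I would use that $\Irr(G\mid c)$ is obtained from induction $\operatorname{Ind}_N^G\theta$ for $\theta\in\Irr(c)$. Then $\operatorname{Ind}_N^G\Res_N^G\chi=\chi\cdot\sum_{\mu\in\Irr(G/N)}\mu$ shows that the characters over $\theta$ are the constituents of $\chi\mu$. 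I still need to link different $\theta$'s in $c$. This requires the known theorem (Kn\"orr/Navarro) that for $G/N$ abelian of order prime to $\ell$, blocks over a $G$-stable $c$ are $\chi\mu$-translates. In general I would instead split $G/N$ into its $\ell$-part and $\ell'$-part. Multiplication by $\ell$-power order linear characters preserves blocks, since $\lambda\equiv1$ modulo the maximal ideal on $\ell$-regular elements and central characters agree mod $J$. For an $\ell'$-quotient the Kn\"orr-type result applies. Composing the two handles the general abelian case.
\end{enumerate}

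The "in particular" statements then follow by taking $b'=b_0$ and $c=c_0$. In that case $\Irr(b)=\Irr(b_0)\lambda\ni1_G\lambda=\lambda$. Conversely, if $\lambda\in\Irr(b)$ with $N\subgp\ker\lambda$, then $\Res_N^G\lambda=1_N\in c_0$. When $N=G_{\text{der}}$, every linear character has $N$ in its kernel, which gives the last statement.

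I expect the main obstacle to be the transitivity step, step 4, in the case where $\ell$ divides $|G/N|$ and $c$ is stable. There I would carefully cite the theorem that the blocks of $G$ covering a $G$-stable block $c$ with $G/N$ an $\ell'$-group are exactly the $\Lin(G/N)$-translates of one another.
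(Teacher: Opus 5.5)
Your ``if'' direction and your derivation of the two ``in particular'' statements are correct and match the paper, which takes $b'$ to be the principal block and uses $G_{\text{der}}\subgp\ker\lambda$. The gap is in the ``only if'' direction, which is the real content of the lemma; the paper simply quotes it from \cite[Lem.~2.2(a)]{KM13}. You correctly isolate the issue: characters over a fixed $\theta\in\Irr(c)$ form a single $\Lin(G/N)$-orbit, so different $\theta$'s in $c$ must be connected. But you do not resolve it.

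Your reduction to an $\ell'$-quotient fails as stated. It is true that linear characters of $\ell$-power order fix every block of $G$. However, this says nothing about how many blocks cover $c$, so it does not let you discard the $\ell$-part of $G/N$. What you would need is this: if $L/N$ is the Sylow $\ell$-subgroup of $G/N$ and $c$ is $G$-stable, then $c$ is covered by a unique block of $L$ (see \cite[Ch.~9]{Nav98}). That block is $G$-stable and is covered by the same blocks of $G$ as $c$. Moreover, the $\ell'$ case you plan to cite is just a special case of the lemma itself, under a name it does not carry; Kn\"orr's theorem in this area concerns defect groups of covering blocks. Your closing paragraph places the obstacle at $\ell \mid |G/N|$ and then cites an $\ell'$-result, which reflects this mismatch.

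The missing idea is an elementary linkage argument that handles every abelian $G/N$ at once. It makes the stability assumption, the $\ell/\ell'$ splitting and the Fong--Reynolds reduction unnecessary. (That reduction is fine, though what it needs is that linear characters of $G_c/N$ extend to $G/N$, not that they restrict.)

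For $\lambda\in\Lin(G/N)$, write $b'\lambda$ for the block with $\Irr(b'\lambda)=\Irr(b')\lambda$, as in your step 2. Let $S$ be the set of $\theta\in\Irr(c)$ such that some character of $G$ over $\theta$ lies in some $b'\lambda$.
\begin{enumerate}
\item Your identity $(\Res_N^G\chi)^G=\chi\sum_\mu\mu$ shows that the characters over $\theta$ are exactly the $\chi\mu$. Hence every character over any $\theta\in S$ lies in some $b'\lambda$.
\item Suppose $\theta_1,\theta_2\in\Irr(c)$ share an irreducible Brauer constituent $\varphi$. Pick an irreducible Brauer character $\eta$ of $G$ occurring in $\varphi^G$. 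Restriction to $\ell$-regular elements commutes with induction, so $\eta$ occurs in the reduction of $\theta_i^G$, hence in that of some constituent $\chi_i$ of $\theta_i^G$, for $i=1,2$. Thus $\chi_1$ and $\chi_2$ lie in a common block, and $S$ is closed under linkage.
\item $S$ is nonempty since $b'$ covers $c$, and $\Irr(c)$ is a single linkage class, so $S=\Irr(c)$.
\end{enumerate}
Every $\chi\in\Irr(b)$ lies over some $\theta\in\Irr(c)$, so $b=b'\lambda$ for some $\lambda$.
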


\begin{proof}
The first statement follows from Clifford theory (see for example \cite[Lem.~2.2(a)]{KM13}), and the second follows by taking $b'$ to be the principal block of $\OO G$.
    The final statement holds because the derived subgroup of $G$ is contained in $\ker(\lambda)$ for all $\lambda\in\Lin(G)$. 
\end{proof}

\subsubsection{Block Domination}

Given a normal subgroup $N\nor G$ and a block $c\in\Bl(\OO [G/N]),$ there is a unique block $b\in\Bl(\OO G)$ such that $\Irr(c)\subseteq\Irr(b)$, where characters of $G/N$ are viewed as characters of $G$ via inflation. We say such a block $b$ \emph{dominates} the block $c$. In general, $b$ may dominate multiple blocks of $\OO[G/N]$. However, in certain situations, we see that this cannot happen:

\begin{lemma}\label{lem:domination}
Let $N\lhd G$ and assume that $p\nmid |N|$ or that $N\leq Z(G)$. If $b\in\Bl(\OO G)$ and $N\subgp\ker\chi$ for some $\chi\in\Irr(b)$ then $b$ dominates a unique block of $\OO[G/N]$. 
\end{lemma}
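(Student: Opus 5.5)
We handle the two hypotheses separately; in both we show that any two characters of $b$ that contain $N$ in their kernel lie in the same block of $\OO[G/N]$. Note $p$ here means the prime $\ell$.

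\emph{Case $\ell\nmid|N|$.} Let $e_N=\frac{1}{|N|}\sum_{n\in N}n$. It lies in $\OO G$ because $|N|$ is invertible in $\OO$, and it is a central idempotent because $N\nor G$. Multiplication by $e_N$ identifies $\OO G e_N$ with $\OO[G/N]$. For $\chi\in\Irr(G)$ we have $\chi(e_N)=\chi(1)$ if $N\subgp\ker\chi$ and $\chi(e_N)=0$ otherwise. Since $e_N$ is central, $be_N$ is an idempotent of $Z(\OO G)$. Hence either $be_N=0$, or $be_N=b$ by primitivity of $b$.

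The hypothesis $\chi\in\Irr(b)$ with $N\subgp\ker\chi$ gives $\chi(be_N)=\chi(1)\neq 0$, so $be_N=b$. Therefore $b$ lies in $Z(\OO Ge_N)\iso Z(\OO[G/N])$. It remains primitive there, because any decomposition of $b$ into orthogonal central idempotents of $\OO Ge_N$ would also be a decomposition in $Z(\OO G)$. So the image of $b$ is a single block of $\OO[G/N]$, and it is the unique block that $b$ dominates.

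\emph{Case $N\subgp Z(G)$.} First reduce to the $\ell$-part. Write $N=N_\ell\times N_{\ell'}$. Since $\chi$ is trivial on $N$, the first case applied to $N_{\ell'}$ shows that $b$ corresponds to a unique block of $G/N_{\ell'}$. We may therefore assume $N$ is a central $\ell$-group.

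For this situation I would use central characters modulo $J(\OO)$. Blocks are determined by the reductions $\overline{\omega_\chi}$, where $\omega_\chi(\hat K)=|K|\chi(g_K)/\chi(1)$. Take $\chi,\psi\in\Irr(b)$, both trivial on $N$. The natural map $Z(\OO G)\to Z(\OO[G/N])$ sends a class sum $\hat K$ to a multiple of the class sum of the image class $KN/N$. One then checks that $\overline{\omega_\chi}$ and $\overline{\omega_\psi}$ agree on the class sums of $G/N$.

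Cleaner is the standard result (Nagao--Tsushima, Thm.~5.8.11) that when $N$ is a central $\ell$-subgroup, the blocks of $G$ and of $G/N$ correspond bijectively via domination. The mechanism is that the kernel of $\OO G\to\OO[G/N]$ lies in $J(\OO)G+J(\OO N)\OO G$, which sits inside the radical. Consequently central idempotents lift and reduce compatibly, since $Z(kG)\to Z(k[G/N])$ induces a bijection on primitive idempotents.

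The main obstacle is exactly this step: showing that a primitive central idempotent of $\OO G$ maps to a primitive idempotent, rather than a sum of several, in $\OO[G/N]$. I would argue through the reduction to $k=\OO/J(\OO)$. The kernel of $kG\to k[G/N]$ is generated by the elements $n-1$ for $n\in N$. Since $N$ is a central $\ell$-group, these elements are nilpotent, so the kernel is a nilpotent ideal. Idempotents modulo a nilpotent ideal lift uniquely up to conjugacy, which in a commutative algebra means lifting uniquely. Hence primitive central idempotents correspond, and $b$ dominates exactly one block.
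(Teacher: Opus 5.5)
Your $\ell'$-case is correct, and it even gives $\Irr(b)\subseteq\Irr(G/N)$; the paper simply cites \cite[Thm.~9.9(c)]{Nav98} there. Your reduction from a central $N$ to its $\ell$-part is also correct.

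The gap is in the central $\ell$-subgroup case, at exactly the step you flag. Your argument uses only that $\ker(kG\to k[G/N])=I_N\,kG$ is nilpotent. But this holds for \emph{every} normal $\ell$-subgroup $N$, and there the conclusion is false. For $G=S_3$, $\ell=3$, $N=A_3$, the algebra $kS_3$ is a single block while $k[G/N]\cong kC_2$ has two blocks, so the principal block of $S_3$ dominates two blocks.

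The underlying problem has two sides. Lifting modulo a nilpotent ideal of the noncommutative algebra $kG$ produces idempotents that need not be central. In the commutative algebra $Z(kG)$, the map $Z(kG)\to Z(k[G/N])$ need not be surjective. A class sum $\hat K$ maps to $|C^\ast:C_G(g)|\cdot\widehat{\tilde K}$, where $\tilde K=KN/N$ and $C^\ast$ is the preimage of $C_{G/N}(gN)$. For a central $\ell$-subgroup $N$ this index is a power of $\ell$ and can exceed $1$; for $G=Q_8$, $N=Z(G)$, $\ell=2$, the image of $Z(kG)$ is just $k\cdot 1$. The same problem undermines your ``one then checks'' step with central characters. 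Classes of $G/N$ all of whose preimage classes carry an $\ell$-divisible multiplier impose no condition.

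The missing ingredient is Osima's theorem: block idempotents lie in the span of $\ell$-regular class sums. Hence $\chi,\psi$ lie in the same block as soon as $\omega_\chi(\hat K)\equiv\omega_\psi(\hat K)$ modulo $J(\OO)$ for all $\ell$-regular classes $K$.

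When $N$ is a central $\ell$-subgroup, each $\ell$-regular coset $gN$ contains exactly one $\ell'$-element. So for $\ell'$-elements $g$ one gets $C^\ast=C_G(g)$, and the $\ell$-regular class sums of $G$ map bijectively, with coefficient $1$, onto those of $G/N$. Now let $\chi,\psi\in\Irr(b)$ be trivial on $N$, with deflations $\tilde\chi,\tilde\psi$. Then for every $\ell$-regular class,
\[
\omega_{\tilde\chi}(\widehat{\tilde K})=\omega_\chi(\hat K)\equiv\omega_\psi(\hat K)=\omega_{\tilde\psi}(\widehat{\tilde K}),
\]
so $\tilde\chi$ and $\tilde\psi$ share a block. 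Equivalently, the block idempotents of $k[G/N]$ lie in the image of $Z(kG)$, and after that your lifting argument inside $Z(kG)$ does go through.

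Simply citing the standard result, as the paper does with \cite[Lem.~17.2]{CE04}, would also close this case. The justification you offer for it, however, is not valid as written.
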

\begin{proof}
If $|N|$  is not divisible by $p$, this follows from \cite[Thm.~9.9(c)]{Nav98}. If $N$ is central, it follows from \cite[Lem.~17.2]{CE04}.
\end{proof}

\subsection{Groups of Lie type}\label{sec:prelimLie}
We now recall some well-known results on finite groups of Lie type and their irreducible characters. Let $p$ be a prime number (potentially equal to $\ell$), let $k$ denote the algebraic closure of the finite field $\F_p$, let $\mathbf{G}$ be a connected reductive algebraic group defined over $k$, and let $F:\mathbf{G}\to\mathbf{G}$ be a Steinberg endomorphism of $\mathbf{G}$. The group of $F$-fixed points of $\mathbf{G}$ is a finite group, denoted $\mathbf{G}^F$.

\subsubsection{Tori and Weyl Groups}\label{toritype}(cf. \cite[Section 3.3]{Carter_1985}, \cite[1.6.4]{Geck_2020})
	Let $\mathbf{T}_0\subgp\mathbf{G}$ be an $F$-stable maximal torus that is maximally $F$-split, i.e., contained in an $F$-stable Borel subgroup of $\mathbf{G}$. Let $\mathbf{W}=N_{\mathbf{G}}(\mathbf{T}_0)/\mathbf{T}_0$ denote the corresponding Weyl group and let $\sigma$ denote the automorphism of $\mathbf{W}$ induced by $F$. Recall that elements $u,w\in\mathbf{W}$ are said to be $\sigma$\textit{-conjugate} if there exists an element $v\in\mathbf{W}$ such that $w=vu\sigma(v)^{-1}$. 
	
	Let $\mathbf{T}\subgp\mathbf{G}$ be an $F$-stable maximal torus. Then there exists an element $g\in\mathbf{G}$ such that ${}^g\mathbf{T}_0=\mathbf{T}$. We have $g^{-1}F(g)\in N_{\mathbf{G}}(\mathbf{T}_0)$, whence $g^{-1}F(g)\mathbf{T}_0\in\mathbf{W}$. The association $\mathbf{T}\mapsto g^{-1}F(g)\mathbf{T}_0$ descends to a well-defined bijection between the sets of $\mathbf{G}^F$-conjugacy classes of $F$-stable maximal tori of $\mathbf{G}$ and $\sigma$-conjugacy classes of $\mathbf{W}$. Furthermore, if the class of an $F$-stable maximal torus $\mathbf{T}$ corresponds to the $\sigma$-conjugacy class of an element $w\in\mathbf{W}$ then $\mathbf{T}^F={}^g\mathbf{T}_0[w]$, where
	\begin{equation*}
		\mathbf{T}_0[w]=\set{t\in\mathbf{T}_0|F(t)=\dot{w}^{-1}t\dot{w}},
	\end{equation*}
	$\dot{w}\in N_{\mathbf{G}}(\mathbf{T}_0)$ is any element that maps to $w$ under the canonical projection $N_{\mathbf{G}}(\mathbf{T}_0)\onto\mathbf{W}$, and $g\in\mathbf{G}$ is any element satisfying ${}^g\mathbf{T}_0=\mathbf{T}$ and $g^{-1}F(g)\mathbf{T}_0=w$.

\subsubsection{More on Semisimple Elements}
The following lemma will be useful in identifying real elements of $\mathbf{G}^F$. Its proof is well-known; see, e.g., \cite[Example 1.4.10]{Geck_2020}.

\begin{lemma}\label{lem:realelts}
	Let $\mathbf{G}$ be a connected reductive algebraic group and let $F:\mathbf{G}\to\mathbf{G}$ be a Steinberg endomorphism of $\mathbf{G}$. Let $s\in\mathbf{G}^F$ be such that $C_{\mathbf{G}}(s)$ is connected. Then $s$ is $\mathbf{G}$-conjugate to $s^{-1}$ if and only if $s$ is $\mathbf{G}^F$-conjugate to $s^{-1}$ (i.e., $s$ is a real element of $\mathbf{G}^F$).
\end{lemma}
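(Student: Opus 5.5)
Let $s\in\mathbf{G}^F$ with $C_{\mathbf{G}}(s)$ connected, and suppose first that $s^{-1}={}^gs$ for some $g\in\mathbf{G}$. The converse direction is trivial, since $\mathbf{G}^F\subseteq\mathbf{G}$. The plan is to use Lang's theorem to replace $g$ by an $F$-fixed element.

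Apply $F$ to ${}^gs=s^{-1}$. Since $F(s)=s$, this gives ${}^{F(g)}s=s^{-1}={}^gs$, so $g^{-1}F(g)\in C_{\mathbf{G}}(s)$. Because $C_{\mathbf{G}}(s)$ is connected and $F$-stable (as $s$ is $F$-fixed), it is a connected algebraic group with a Steinberg endomorphism given by the restriction of $F$. Lang's theorem therefore applies to $C_{\mathbf{G}}(s)$. It gives $c\in C_{\mathbf{G}}(s)$ with $g^{-1}F(g)=c\,F(c)^{-1}$.

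Now set $h:=gc$. Then
\[F(h)=F(g)F(c)=g\,c\,F(c)^{-1}F(c)=gc=h,\]
so $h\in\mathbf{G}^F$. Moreover ${}^hs={}^g({}^cs)={}^gs=s^{-1}$, since $c$ centralizes $s$. Hence $s$ is $\mathbf{G}^F$-conjugate to $s^{-1}$.

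The only substantive point is invoking Lang's theorem on the centralizer, and this is exactly where connectedness of $C_{\mathbf{G}}(s)$ is needed. Without it, $g^{-1}F(g)$ could lie in a component of $C_{\mathbf{G}}(s)$ that does not meet the image of the Lang map. In that case $\mathbf{G}$-conjugacy would split into several $\mathbf{G}^F$-classes, indexed by the $F$-classes of the component group.
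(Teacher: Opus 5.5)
Your proof is correct. It is the standard Lang--Steinberg argument that the paper has in mind: the paper gives no proof of its own and only cites the result as well known (Geck--Malle, Example 1.4.10). Connectedness and $F$-stability of $C_{\mathbf{G}}(s)$ let you write $g^{-1}F(g)$ as a Lang image inside the centralizer, and then correct $g$ by an element of $C_{\mathbf{G}}(s)$ to an $F$-fixed conjugating element.
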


The next lemma is also well-known (cf. for example \cite[Exercise 20.16]{Malle_Testerman_2011}) and can sometimes be applied to show that the centralizer of a semisimple element is connected.

\begin{lemma}\label{lem:centconn}
	Let $\mathbf{G}$ be a semisimple algebraic group over $k$ and let $\pi:\mathbf{G}_{\text{sc}}\onto\mathbf{G}$ be the natural isogeny from a simply connected semisimple group with the same root system as $\mathbf{G}$. If $s\in\mathbf{G}$ is a semisimple element whose order is relatively prime to $|\ker\pi|$, then $C_{\mathbf{G}}(s)$ is connected.
\end{lemma}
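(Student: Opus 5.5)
The statement to prove is Lemma~\ref{lem:centconn}: if $s\in\mathbf{G}$ is semisimple and $\gcd(|s|,|\ker\pi|)=1$, then $C_{\mathbf{G}}(s)$ is connected. The plan is to lift $s$ to the simply connected cover, use Steinberg's theorem there, and then measure the component group of $C_{\mathbf{G}}(s)$ by a homomorphism into $\ker\pi$.

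\emph{Step 1: lifting.} Write $Z:=\ker\pi$. Since $\pi$ is a central isogeny, $Z\subgp Z(\mathbf{G}_{\text{sc}})$ is finite. Choose a semisimple $\tilde s\in\mathbf{G}_{\text{sc}}$ with $\pi(\tilde s)=s$. Since $\tilde s^{|s|}\in Z$ and $\gcd(|s|,|Z|)=1$, we may replace $\tilde s$ by $\tilde s z$ for a suitable $z\in Z$. Concretely, the map $z\mapsto z^{|s|}$ is a bijection of the abelian group $Z$, so some $z$ satisfies $z^{|s|}=\tilde s^{-|s|}$. After this replacement $\tilde s$ has order exactly $|s|$.

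\emph{Step 2: the homomorphism into $Z$.} By Steinberg's theorem, $C_{\mathbf{G}_{\text{sc}}}(\tilde s)$ is connected. Its image under $\pi$ is therefore a connected subgroup of $C_{\mathbf{G}}(s)$. For $g\in\mathbf{G}$ with $gsg^{-1}=s$, pick a lift $\tilde g$; then $\tilde g\tilde s\tilde g^{-1}=\tilde s\,z_g$ for a unique $z_g\in Z$. Because $Z$ is central, $z_g$ does not depend on the choice of lift, and $g\mapsto z_g$ is a homomorphism $C_{\mathbf{G}}(s)\to Z$. Its kernel is $\pi(C_{\mathbf{G}_{\text{sc}}}(\tilde s))$: if $z_g=1$, then $\tilde g$ centralizes $\tilde s$. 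Hence $C_{\mathbf{G}}(s)/\pi(C_{\mathbf{G}_{\text{sc}}}(\tilde s))$ embeds in $Z$. Since $\pi(C_{\mathbf{G}_{\text{sc}}}(\tilde s))$ is connected and of finite index, it equals $C_{\mathbf{G}}(s)^\circ$.

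\emph{Step 3: the order argument, which is the crux.} It remains to show $z_g=1$ for every $g\in C_{\mathbf{G}}(s)$. Conjugating $\tilde s=\tilde s$ to the $|s|$-th power gives
\[\tilde s^{|s|}=\tilde g\,\tilde s^{|s|}\,\tilde g^{-1}=(\tilde s z_g)^{|s|}=\tilde s^{|s|}z_g^{|s|},\]
where the last equality uses that $z_g$ is central. Thus $z_g^{|s|}=1$. But the order of $z_g$ divides $|Z|$, which is coprime to $|s|$, so $z_g=1$. Therefore $C_{\mathbf{G}}(s)=\pi(C_{\mathbf{G}_{\text{sc}}}(\tilde s))$ is connected.

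Note that only $\tilde s^{|s|}\in Z$ is actually used in Step 3, so the adjustment in Step 1 is optional. The only nonelementary input is Steinberg's connectedness theorem for simply connected groups, which we take as known.
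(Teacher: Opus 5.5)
Your argument is correct. You lift $s$ to a semisimple $\tilde s\in\mathbf{G}_{\text{sc}}$ (e.g.\ the semisimple part of any preimage), apply Steinberg's connectedness theorem to $C_{\mathbf{G}_{\text{sc}}}(\tilde s)$, and show that the homomorphism $C_{\mathbf{G}}(s)\to\ker\pi$, $g\mapsto\tilde s^{-1}\tilde g\tilde s\tilde g^{-1}$, is trivial because every element $z$ of its image satisfies $z^{|s|}=1$. The paper gives no proof of its own and only cites the lemma as well known (Malle--Testerman, Exercise~20.16); your argument is the standard one underlying that reference.
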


\subsubsection{Lusztig Series}\label{Lusztigseries}(cf. \cite{Geck_2020})
    If $\mathbf{T}$ is an $F$-stable maximal torus of $\mathbf{G}$ and $\theta$ is an irreducible character of $\mathbf{T}^F$, then the corresponding Deligne-Lusztig character of $\mathbf{G}^F$ is denoted $R_{\mathbf{T}}^{\mathbf{G}}(\theta)$. The Deligne-Lusztig characters $R_{\mathbf{T}}^{\mathbf{G}}(\theta)$ are in fact \textit{virtual} characters of $\mathbf{G}^F$, i.e., they are contained in the $\Z$-span of $\Irr(\mathbf{G}^F)$ within the space of class functions on $\mathbf{G}^F$. 	
	
    Let $\mathbf{G}^\ast$ be another connected reductive algebraic group defined over $k$ and let $F^\ast:\mathbf{G}^\ast\to\mathbf{G}^\ast$ be a Steinberg endomorphism of $\mathbf{G}^\ast$ such that the pair $(\mathbf{G}^\ast,F^\ast)$ is \textit{dual} to $(\mathbf{G},F)$. In other words, assume there exists a maximally $F$-split torus $\mathbf{T}_0\subgp\mathbf{G}$, a maximally $F^\ast$-split torus $\mathbf{T}_0^\ast\subgp\mathbf{G}^\ast$, and an isomorphism $\delta:X(\mathbf{T}_0)\isoto Y(\mathbf{T}_0^\ast)$ between the character group of $\mathbf{T}_0$ and the cocharacter group of $\mathbf{T}_0^\ast$ such that $\delta$ intertwines the actions of $F$ and $F^\ast$ and defines an isomorphism of root data (cf. \cite[Def.~1.5.17]{Geck_2020}).
	
	We fix, once and for all, group isomorphisms $k^\times\isoto(\Q/\Z)_{p'}$ and $(\Q/\Z)_{p'}\isoto\mu_{p'}$, where $\mu_{p'}$ denotes the group of complex roots of unity that have order prime to $p$ (note that composing these two isomorphisms defines an injective group homomorphism $k^\times\into\C^\times$). Once these isomorphisms are fixed, one obtains
	\begin{itemize}
		\item[(1)] a group isomorphism $\mathbf{T}_0^{\ast F^\ast}\isoto\Irr(\mathbf{T}_0^F)$, which is determined by $\delta$ and is denoted $s\mapsto\hat{s}$; and 
		\item[(2)] a partition of $\Irr(\mathbf{G}^F)$ into subsets $\mathcal{E}(\mathbf{G}^F,s)$, called \textit{(rational) Lusztig series}, where $s$ is a semisimple element of $\mathbf{G}^{\ast F^\ast}$.
	\end{itemize}
	Two Lusztig series $\mathcal{E}(\mathbf{G}^F,s)$ and $\mathcal{E}(\mathbf{G}^F,t)$ are equal if and only if the semisimple elements $s,t\in\mathbf{G}^{\ast F^\ast}$ are $\mathbf{G}^{\ast F^\ast}$-conjugate. The irreducible characters that belong to $\mathcal{E}(\mathbf{G}^F,1)$ are the \textit{unipotent characters} of $\mathbf{G}^F$. For example, the principal character of $\mathbf{G}^F$ is a unipotent character.
    
	For the following, we refer the reader to \cite[Props.~2.5.20, 2.5.21]{Geck_2020}. Let $z\in Z(\mathbf{G}^{\ast})^{F^\ast}$. Note that $z\in\mathbf{T}_0^{\ast F^\ast}$. Thus, by (1) above, there is a corresponding linear character $\hat{z}\in\Irr(\mathbf{T}_0^F)$. There exists a unique linear character of $\mathbf{G}^F$ that extends $\hat{z}$ and contains all unipotent elements of $\mathbf{G}^F$ in its kernel. We denote this irreducible character of $\mathbf{G}^F$, abusively, by $\hat{z}$. Then $\hat{z}\in\mathcal{E}(\mathbf{G}^F,z)$, and for any semisimple element $s\in\mathbf{G}^{\ast F^\ast}$ we have $$\mathcal{E}(\mathbf{G}^F,s)\cdot\hat{z}=\mathcal{E}(\mathbf{G}^F,sz).$$
    
     The permutation of $\Irr(\mathbf{G}^F)$ induced by complex conjugation descends to a permutation of the Lusztig series. To be precise, for any semisimple element $s\in\mathbf{G}^{\ast F^\ast}$ we have $$\overline{\mathcal{E}(\mathbf{G}^F,s)}=\mathcal{E}(\mathbf{G}^F,s^{-1}).$$ (See, for example, \cite[Prop.~3.3.15]{Geck_2020}.)
 
\subsubsection{The Jordan Decomposition of Characters} \label{Jdecomp}

Given a semisimple element $s\in{\bG^\ast}^{F^\ast}$, there is a bijection 
\begin{equation*}
		J_s:\mathcal{E}(\mathbf{G}^F,s)\isoto\mathcal{E}(C_{\mathbf{G}^\ast}(s)^{F^\ast},1),
\end{equation*}
and a collection of bijections as above is called a \textit{Jordan decomposition} if it satisfies the inner product condition in \cite[Thm.~2.6.4]{Geck_2020}. Such a collection exists by the work of Lusztig \cite{Lusztig_1988} (see \cite[Thm.~2.6.4,  Rem.~2.6.26]{Geck_2020}). Here we will only need to consider the case that $s\in\mathbf{G}^{\ast F^\ast}$ satisfies $C_{\mathbf{G}^\ast}(s)^{F^\ast}\subgp C_{\mathbf{G}^\ast}^\circ(s)$, where $C_{\mathbf{G}^\ast}^\circ(s)$ denotes the connected component of the identity in $C_{\mathbf{G}^\ast}(s)$. In this situation, there exists by \cite{Digne_1990, Schaeffer_Fry_2025} a unique Jordan decomposition satisfying certain additional properties. (We will therefore refer in this case to ``the" Jordan decomposition of characters.)

	One key property of the Jordan decomposition in this situation is its equivariance with respect to Galois action. The next result follows from  \cite{Srinivasan_2015} when $Z(\bG)$ is connected and from \cite[Thms.~1.1,~2.1]{Schaeffer_Fry_2025} when $Z(\bG)$ is not necessarily connected but  $C_{\mathbf{G}^\ast}(s)^{F^\ast}\subgp C_{\mathbf{G}^\ast}^\circ(s)$.

\begin{lemma}\label{lem:Jdecompprops} 
	Assume that $F$ is a Frobenius endomorphism of $\mathbf{G}$. Let $s\in\mathbf{G}^{\ast F^\ast}$ be a semisimple element satisfying $C_{\mathbf{G}^\ast}(s)^{F^\ast}\subgp C_{\mathbf{G}^\ast}^\circ(s)$ and let $\chi\in\mathcal{E}(\mathbf{G}^F,s)$. Then the Jordan decomposition of $\mathbf{G}^F$ satisfies $J_{s^{-1}}(\overline{\chi})=\overline{J_s(\chi)}$. In particular, if $\nu=J_s(\chi)$, then $\chi$ is real if and only if there exists an element $g\in \mathbf{G}^{\ast F^\ast}$ such that ${}^gs=s^{-1}$ and ${}^g\nu=\overline{\nu}$.
\end{lemma}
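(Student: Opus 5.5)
The first claim is the Galois-equivariance $J_{s^{-1}}(\overline{\chi})=\overline{J_s(\chi)}$. Here $\overline{\chi}\in\mathcal{E}(\mathbf{G}^F,s^{-1})$ because $\overline{\mathcal{E}(\mathbf{G}^F,s)}=\mathcal{E}(\mathbf{G}^F,s^{-1})$. I would take this directly from the cited sources rather than reprove it. When $Z(\bG)$ is connected, it is the result of Srinivasan--Vinroot \cite{Srinivasan_2015}, applied to the Galois automorphism of $\Q(\mu_{|G|})$ induced by complex conjugation. When $Z(\bG)$ is disconnected but $C_{\bG^\ast}(s)^{F^\ast}\subgp C^\circ_{\bG^\ast}(s)$, it is \cite[Thms.~1.1,~2.1]{Schaeffer_Fry_2025}. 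One check is needed: the hypothesis is preserved under $s\mapsto s^{-1}$. This holds since $C_{\bG^\ast}(s^{-1})=C_{\bG^\ast}(s)$, so $J_{s^{-1}}$ is also the uniquely determined Jordan decomposition. Under that identification, complex conjugation on $\mathcal{E}(C_{\bG^\ast}(s)^{F^\ast},1)$ means conjugating unipotent characters of the same group $C_{\bG^\ast}(s)^{F^\ast}=C_{\bG^\ast}(s^{-1})^{F^\ast}$.

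For the "in particular" part, set $\nu=J_s(\chi)$. First suppose $\chi$ is real. Then $\mathcal{E}(\mathbf{G}^F,s)=\mathcal{E}(\mathbf{G}^F,s^{-1})$, so by the conjugacy criterion for Lusztig series there is $g\in\bG^{\ast F^\ast}$ with ${}^gs=s^{-1}$. Next I would use compatibility of Jordan decomposition with conjugation: conjugation by $g$ gives an isomorphism $C_{\bG^\ast}(s)^{F^\ast}\to C_{\bG^\ast}(s^{-1})^{F^\ast}$, and $J_{{}^gs}(\chi)={}^g J_s(\chi)$ for the same $\chi$, since both series coincide. This should follow from the uniqueness of the Jordan decomposition in \cite{Digne_1990, Schaeffer_Fry_2025}, because ${}^g\circ J_s$ satisfies the same defining properties. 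Hence
\[
{}^g\nu=J_{s^{-1}}(\chi)=J_{s^{-1}}(\overline{\chi})=\overline{J_s(\chi)}=\overline{\nu}.
\]
Conversely, suppose such a $g$ exists. Then $J_{s^{-1}}(\chi)={}^g\nu=\overline{\nu}=J_{s^{-1}}(\overline{\chi})$, and since $J_{s^{-1}}$ is injective, $\chi=\overline{\chi}$.

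The main obstacle is this conjugation-compatibility $J_{{}^gs}={}^g\circ J_s$. To handle it, I would check that the characterizing properties in the uniqueness statement are transported by conjugation by $g$. The key property is the multiplicity condition $\langle\chi,R_{\mathbf{T}}^{\bG}(\hat t)\rangle=\pm\langle J_s(\chi),R_{\mathbf{T}^\ast}^{C}(1)\rangle$, and it is invariant under relabeling tori by ${}^g$. Uniqueness then forces the equality.
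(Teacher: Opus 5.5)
Your proposal is correct and follows the paper, which simply attributes the whole lemma to \cite{Srinivasan_2015} in the connected-center case and to \cite[Thms.~1.1,~2.1]{Schaeffer_Fry_2025} when $C_{\bG^\ast}(s)^{F^\ast}\subgp C^\circ_{\bG^\ast}(s)$; your derivation of the ``in particular'' from $J_{{}^gs}={}^g\circ J_s$ and the injectivity of $J_{s^{-1}}$ is a sound unpacking of what those sources give. One caveat: the multiplicity condition alone does not pin down $J_s$, since it cannot distinguish $\nu$ from $\overline{\nu}$ when they have the same uniform projection, so your transport argument has to check all the characterizing properties, not just that one. In practice, though, $J_{{}^gs}={}^g\circ J_s$ is built into indexing series by $\bG^{\ast F^\ast}$-classes, because $C_{\bG^\ast}(s)^{F^\ast}$ acts on itself by inner automorphisms and therefore fixes its own unipotent characters.
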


\subsubsection{Semisimple Characters}\label{sschars}
	Let $s\in\mathbf{G}^{\ast F^\ast}$ be such that $C_{\mathbf{G}^\ast}(s)^{F^\ast}\subgp C_{\mathbf{G}^\ast}^\circ(s)$. Then the unique irreducible character in $\mathcal{E}(\mathbf{G}^F,s)$ that corresponds via the Jordan decomposition $J_s$ to the principal character of $C_{\mathbf{G}^\ast}(s)^{F^\ast}$ is known as a \textit{semisimple} character of $\mathbf{G}^F$. In what follows, we will often write $\chi_s$ for the unique semisimple character that belongs to $\mathcal{E}(\mathbf{G}^F,s)$ in this situation.

\subsubsection{Blocks of $\bG^F$}\label{blocksGF} (cf. \cite{Broue_1989})
	Assume that the prime $\ell$ does not equal $p=\ch(k)$ and that $F$ is a Frobenius endomorphism of $\mathbf{G}$. Let $s$ be a semisimple $\ell'$-element of $\mathbf{G}^{\ast F^\ast}$. Define
	\begin{equation*}
		\mathcal{E}_\ell(\mathbf{G}^F,s):=\bigcup_{x\in C_{\mathbf{G}^{\ast}}(s)_\ell^{F^\ast}}\mathcal{E}(\mathbf{G}^F,sx),
	\end{equation*}
	where $C_{\mathbf{G}^{\ast}}(s)_\ell^{F^\ast}$ denotes the set of $\ell$-elements of $C_{\mathbf{G}^{\ast}}(s)^{F^\ast}$. Then there exists a subset $\mathcal{S}\subseteq\Bl(\OO[\mathbf{G}^F])$ such that
	\begin{equation*}
		\mathcal{E}_\ell(\mathbf{G}^F,s)=\bigcup_{b\in\mathcal{S}}\Irr(b).
	\end{equation*}
	In other words, $\mathcal{E}_\ell(\mathbf{G}^F,s)$ is a ``union of $\ell$-blocks'' of $\OO[\mathbf{G}^F]$ (\cite[Thm.~ 2.2]{Broue_1989}).
	
     Now let $s$ and $t$ be semisimple $\ell'$-elements of $\mathbf{G}^{\ast F^\ast}$. Then, as for Lusztig series, one has $\mathcal{E}_{\ell}(\mathbf{G}^F,s)=\mathcal{E}_\ell(\mathbf{G}^F,t)$ if and only if $s$ is $\mathbf{G}^{\ast F^\ast}$-conjugate to $t$. Moreover, $\Irr(\mathbf{G}^F)$ is equal to the disjoint union of the subsets $\mathcal{E}_\ell(\mathbf{G}^F,s)$ where $s$ runs over a set of representatives for the conjugacy classes of semisimple $\ell'$-elements of $\mathbf{G}^{\ast F^\ast}$. It follows that if $b\in\Bl(\OO[\mathbf{G}^F])$ then there exists a semisimple $\ell'$-element $s\in\mathbf{G}^{\ast F^\ast}$, unique up to conjugation, such that $\Irr(b)\subseteq\mathcal{E}_\ell(\mathbf{G}^F,s)$. For example, if $b_0$ denotes the principal block of $\OO[\mathbf{G}^F]$ then $\Irr(b_0)\subseteq\mathcal{E}_\ell(\mathbf{G}^F,1)$.

\section{Initial Results}\label{sec:othergroups}

\begin{lemma}\label{lem:sporalt}
    Let $G$ be a sporadic simple group, an alternating group, or the Tits group $\tw{2}\type{F}_4(2)'$. Then $\OO G$ possesses a non-principal real $2$-block if and only if $G$ is not one of $M_{11}, M_{22}, M_{23}, M_{24}$.
\end{lemma}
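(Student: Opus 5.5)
The proof splits into three cases: the alternating groups, the sporadic groups together with the Tits group, and the four exceptional Mathieu groups. The main tool throughout is Lemma~\ref{lem:realblocksfingrps}. A $2$-block is real as soon as it contains one real-valued irreducible character. So to prove existence it suffices to find a real irreducible character that does not lie in the principal $2$-block.

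\textbf{Alternating groups.} Here we want a real irreducible character of $2$-defect zero, or more generally a real character lying in a non-principal block. By the Nakayama conjecture (Brauer--Robinson), the $2$-blocks of $S_n$ are labelled by $2$-cores, and all characters of $S_n$ are rational. For $n \geq 5$ not too small, choose a partition $\lambda$ of $n$ whose $2$-core is not the $2$-core of $(n)$. For example, take $\lambda$ to be a $2$-core staircase of size $m(m+1)/2$ with $m \geq 2$, extended by adding dominoes, chosen so that $\lambda \neq \lambda'$ whenever possible. Then $\chi^\lambda$ restricts irreducibly to $A_n$, with real (indeed rational) values. Blocks of $A_n$ covered by distinct blocks of $S_n$ are distinct unless the two $S_n$-blocks are conjugate under the sign character. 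For $p=2$ the sign lies in the principal block, so the block of $\chi^\lambda|_{A_n}$ is non-principal. The one subtlety is choosing $\lambda$ with a nontrivial $2$-core; this is possible exactly when $n$ is not of the form where only the trivial core fits, and since $n = m(m+1)/2 + 2w$ always admits $m \in \{0,1,2\}$ with the right parity, a core of size $3$ ($m=2$) works for all odd $n \geq 5$. For even $n \geq 6$, a core of size $6$ ($m=3$) works. I expect the small cases ($n = 5, 6, 7, 8$) to need a direct check of the decomposition matrices, for instance $A_5$ has real characters of degree $4$ (defect zero). In those cases the relevant characters of $A_n$ may come from self-conjugate $\lambda$ splitting into two constituents, which can be non-real; I would settle these individually using the Atlas and the Modular Atlas.

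\textbf{Sporadic groups and the Tits group.} This is a finite verification. For each group I will use the ordinary character tables and the known $2$-block distributions, available in GAP's character table library via \texttt{PrimeBlocks}. The check is to exhibit a real irreducible character outside the principal $2$-block. A defect-zero real character usually suffices: for example $J_1$ has real characters of degree $56$ and $120$ of $2$-defect zero, $M_{12}$ has characters of degree $11$ and $55$ of defect zero, and so on.

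\textbf{The four Mathieu groups.} For $M_{11}, M_{22}, M_{23}, M_{24}$ we must show the converse: every non-principal $2$-block is non-real. In each case I will list the $2$-blocks from the tables and observe the following. Apart from the principal block, the $2$-blocks consist of pairs of complex-conjugate defect-zero (or low-defect) characters with non-real values, for example the degree-$10$ pair in $M_{11}$ and the degree-$231$ characters in $M_{23}$, and complex conjugation interchanges these blocks. By Lemma~\ref{lem:realblocksfingrps}, these blocks are then non-real.

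I expect the main obstacle to be the bookkeeping for small alternating groups and the exhaustive verification for the larger sporadic groups. The latter is purely computational, and I would carry it out in GAP rather than by hand.
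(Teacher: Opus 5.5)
Your route is the paper's. For $A_n$ you use the same $2$-core argument. The paper makes your choice explicit with $\lambda=(n-1,1)$ for odd $n$ and $\lambda=(n-3,2,1)$ for even $n$; these have cores $(2,1)$ and $(3,2,1)$ and are not self-conjugate for $n\geq 8$. It combines this with the fact that the principal block of $S_n$ is the only block covering the principal block of $A_n$. Everything else ($A_5,A_6,A_7$, the sporadic groups and the Tits group) is a GAP check of character tables and $2$-block distributions, as you propose. So the structure is right, but it is complete only because the finite cases are actually computed. Several specific facts you cite as evidence are false and cannot serve as the verification.

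\textbf{$M_{12}$.} The characters of degree $11$ and $55$ have odd degree, so they cannot have $2$-defect zero, which would need $2^6$ to divide the degree. In fact every nontrivial $2$-regular class of $M_{12}$ has even size. By the central-character criterion, every odd-degree character of $M_{12}$ therefore lies in the principal block. $M_{12}$ has no $2$-defect-zero character at all, since no degree is divisible by $2^6$. So the real non-principal block there necessarily has positive defect, and your heuristic that a real defect-zero character ``usually suffices'' fails.

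\textbf{$M_{11}$ and $M_{23}$.} The degree-$10$ characters of $M_{11}$ and the degree-$231$ characters of $M_{23}$ also lie in the principal $2$-block. The non-principal $2$-blocks of these groups are exactly the defect-zero blocks $\{16a\},\{16b\}$, resp.\ $\{896a\},\{896b\}$, and complex conjugation interchanges them.

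\textbf{$M_{22}$ and $M_{24}$.} These have only the principal $2$-block: the $2$-part of every nontrivial $2$-regular class size exceeds the $2$-part of every character degree.

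\textbf{Wording.} Your description that the non-principal blocks ``consist of pairs of complex-conjugate characters'' is backwards. By Lemma~\ref{lem:realblocksfingrps}, a block containing both $\chi$ and $\overline{\chi}$ is real. What actually happens is that the blocks themselves are swapped in pairs.
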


\begin{proof}
    If $G$ is a sporadic group,  $\tw{2}\type{F}_4(2)'$, or an alternating group $A_n$ with $5\leq n\leq 7$, the statement can be seen using GAP \cite{GAP}; this was already noted for sporadic groups in \cite[Sec.~5]{GowMurray}. 
    
    Now let $G=A_n$ be an alternating group with $n\geq8$. A character of the symmetric group $S_n$ lies in the principal block if and only if it corresponds to a partition with empty $2$-core for $n$ even, resp. $2$-core $(1)$ if $n$ is odd. Then the character corresponding to the partition $(1,2,n-3)$ lies in a non-principal $2$-block of $S_n$ if $n$ is even and the character corresponding to $(1,n-1)$ lies in a non-principal $2$-block of $S_n$ if $n$ is odd. Further, in each case the partition is not self-conjugate (recall here that $n\geq 8$), so restricts irreducibly to a (necessarily real-valued) character $\chi\in\Irr(A_n)$. As $[S_n:A_n]=2$, the principal $2$-block of $S_n$ is the unique block above the principal block of $A_n$, and hence $\chi$ lies in a real, non-principal $2$-block of $A_n$.
\end{proof}
   
\begin{lemma}\label{lem:defcharRee}
    Keep the notation in Section \ref{sec:prelimLie}, and suppose that either the characteristic of $k$ is $2$ or that $\bG^F=\tw{2}\type{G}_2(3^{2n+1})$ with $n\geq 2$. Then $\OO[\bG^F]$ and $\OO[\bG^F/Z(\bG^F)]$ each possess a non-principal real $2$-block.
\end{lemma}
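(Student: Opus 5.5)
Two cases need different tools. When the characteristic of $k$ is $2$, the prime $\ell=2$ equals the defining characteristic. For $\tw{2}\type{G}_2(q)$ with $q=3^{2n+1}$, the Sylow $2$-subgroups are elementary abelian of order $8$ and the blocks are known explicitly (work of Landrock--Michler).

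\textbf{Characteristic $2$.} I would use the classical result of Humphreys on defining-characteristic blocks. Apart from the blocks of defect zero, each $2$-block of $\bG^F$ is determined by a linear character of the centre $Z(\bG^F)$. In particular the Steinberg character $\mathrm{St}$ has defect zero: its degree is the full $2$-part $|\bG^F|_2$. Since $\mathrm{St}$ is rational-valued, its block $\{\mathrm{St}\}$ is real by Lemma~\ref{lem:realblocksfingrps}. It is non-principal whenever $\bG^F$ is not a $2$-group, which holds in every case where $\bG^F$ or its quotient is relevant.

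For $\bG^F/Z(\bG^F)$, note that $Z(\bG^F)$ has odd order in characteristic $2$. The Steinberg character has $Z(\bG^F)$ in its kernel: it vanishes off semisimple elements, and on central $z$ its value is $\pm|C(z)|_2=\mathrm{St}(1)$ with positive sign. So $\mathrm{St}$ inflates from a defect-zero character of the quotient. Lemma~\ref{lem:domination} then shows its block in the quotient is again non-principal and real.

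There is a small wrinkle for tiny groups where $\bG^F/Z$ is not simple or has exceptional behaviour, such as $\Sp_4(2)$ or $G_2(2)$. The defect-zero argument still applies there, since $\mathrm{St}$ of $\bG^F$ factors through the quotient.

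\textbf{Ree groups $\tw{2}\type{G}_2(q)$, $q=3^{2n+1}$.} Here $Z(\bG^F)=1$, so only one group needs treating. I would look for a real irreducible character of $2$-defect zero, that is, of degree divisible by $8$. Such a character has defect zero, is therefore alone in a non-principal block, and by Lemma~\ref{lem:realblocksfingrps} that block is real.

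From Ward's character table, the characters of degree $(q-1)(q^2-q+1)$ and $(q^2-1)(q+1\pm 3m)$ (with $q=3m^2$) are candidates. The factor $q^2-1$ with $q\equiv 3\pmod 8$ has $2$-part exactly $8$, so I would take a family whose degree contains $q^2-1$ together with an odd cofactor.

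The family of degree $(q^2-1)(q-3m+1)$ is parametrized by characters of the cyclic torus of order $q-3m+1$, an odd number. A character in this family is real exactly when its parameter class is stable under inversion. Since the normalizer of this torus acts as a group of order $6$ containing the inversion, every such character is real.

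Existence requires the torus order $q-3m+1$ to exceed $1$ enough that a nontrivial parameter exists; this holds for $n\ge1$, and certainly under the hypothesis $n\geq2$.

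\textbf{Main obstacle.} The main difficulty is verifying the reality claim from the character table, equivalently that the Weyl-type normalizer quotient contains inversion. If that fails, the fallback is to count. Non-real characters come in pairs, and there are more defect-zero characters of this degree than can be paired off.
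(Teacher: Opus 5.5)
Your characteristic-$2$ case is the same as the paper's: $\mathrm{St}$ is rational-valued, of defect zero, and trivial on the odd-order centre. Strictly, non-principality needs $|\bG^F|$ even, not ``$\bG^F$ not a $2$-group''.

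For the Ree groups you take a genuinely different route. Write $q=3m^2$.

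The paper uses only degrees and multiplicities. The defect-zero families of degrees $(q^2-1)(q+1\mp3m)$ have $m(m\pm1)/2$ members. Their sum $m^2$ is odd, so one family has odd size. Since each family is closed under complex conjugation, that family contains a real character.

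You instead show that every member is real, using $\overline{R_T^G(\theta)}=R_T^G(\bar\theta)$ and the fact that inversion lies in $N_G(T)/T\cong C_6$. This gives more: $m^2$ real defect-zero blocks. The cost is the structural fact you flagged as the main obstacle, which is easily supplied. $N_G(T)=T\rtimes C_6$ is a Frobenius group, since nontrivial elements of $T$ have centralizer $T$. So its involution acts fixed-point-freely on the abelian group $T$ and therefore inverts it. Alternatively, $N_G(T)$ induces $t\mapsto t^q$ on $T$, and $q^3\equiv-1$ modulo $q^2-q+1=(q+3m+1)(q-3m+1)$.

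There are a few minor slips:
\begin{itemize}
\item Since $\pm R_T^G(\theta)$ has degree $|G|_{3'}/|T|$, the family of degree $(q^2-1)(q-3m+1)$ comes from the torus of order $q+3m+1$, not $q-3m+1$. This is harmless, because both tori behave the same way.
\item The degree $(q-1)(q^2-q+1)$ has $2$-part $2$, so it is not a candidate.
\item Your fallback ``more characters than can be paired off'' is not an argument as stated. What works is the parity count, and that is exactly the paper's proof.
\item Your $\type{G}_2(2)$ remark is correct for $\bG^F/Z(\bG^F)$, which is all the lemma asserts. It does not extend to $\type{G}_2(2)'\cong\PSU_3(3)$, where $\mathrm{St}$ restricts to two non-real characters of degree $32$.
\end{itemize}
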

\begin{proof}
    If the characteristic of $k$ is $2$, then the Steinberg character of $G=\bG^F$ lies in a block of defect zero, is real-valued, and is trivial on the center (see e.g. \cite[Prop.~3.4.10]{Geck_2020}), yielding a non-principal real $2$-block.

    If $\bG^F=\tw{2}\type{G}_2(3^{2n+1})$ with $n\geq 2$, write $q^2=3^{2n+1}$. Then from \cite{luebeckwebsite}, we see there are $\frac{1}{6}(q^2+\sqrt{3}q)$ characters of degree $(q^2+1-q\sqrt{3})(q^4-1)$ and $\frac{1}{6}(q^2-\sqrt{3}q)$ characters of degree $(q^2+1+q\sqrt{3})(q^4-1)$. Note that each of these is defect-zero, and that one of these multiplicities must be odd. Hence, there must be a real-valued defect-zero character, and hence a non-principal real $2$-block.
\end{proof}

\section{ Groups of Lie type in Odd characteristic}\label{sec:Lietype}

We keep all the notation set in Section \ref{sec:preliminaries}. In addition, we now set $\ell=2$. Our goal is to determine which finite simple groups of Lie type have no non-principal real $2$-block (although, we note that our analysis in this section will cover other non-simple finite groups of Lie type as well). Thanks to Lemma \ref{lem:defcharRee}, we only need to consider those finite simple groups that arise from connected reductive algebraic groups defined over fields of odd characteristic and such that $F$ is a Frobenius endomorphism. Henceforth, all connected reductive groups considered will be defined over $k=\bar{\F}_p$ for some odd prime $p$. 

Our method for proving the existence of non-principal real $2$-blocks in a finite group of Lie type will be to demonstrate that the dual group possesses a nontrivial semisimple element satisfying the conditions in the next definition.

\begin{definition}\label{defn:conditions}
	Let $\mathbf{G}$ be a connected reductive algebraic group over $k=\bar{\F}_p$ with $p\neq 2$ and let $F:\mathbf{G}\to\mathbf{G}$ be a Steinberg endomorphism of $\mathbf{G}$. Let $1\neq s\in\mathbf{G}^F$ be semisimple. We say $s$ satisfies condition (\ref{condA}) if
	\begin{equation}\tag{$\mathcal{A}$}\label{condA}
		s\text{ is real, has odd order, and }C_{\mathbf{G}}(s)^F\subgp C_{\mathbf{G}}^\circ(s).
	\end{equation} 
	We say $s$ satisfies condition (\ref{condB}) if
	\begin{equation}\tag{$\mathcal{B}$}\label{condB}
		s\text{ is not }\mathbf{G}^F\text{-conjugate to any element of the form }sz\text{, }1\neq z\in Z(\mathbf{G}^F).
	\end{equation}
	Finally, we say $s$ satisfies condition (\ref{condC}) if
	\begin{equation}\tag{$\mathcal{C}$}\label{condC}
		s\in(\mathbf{G}^F)_{\text{der}}.
	\end{equation}
\end{definition}

With these conditions defined, we may use them to prove sufficient conditions for groups of Lie type to have a non-principal real $2$-block.

\begin{proposition}\label{prop:main}
	Let $\mathbf{G}$ be a connected reductive algebraic group defined over $k=\bar{\F}_p$ with $p\neq 2$. Let $F:\mathbf{G}\to\mathbf{G}$ be a Frobenius endomorphism of $\mathbf{G}$, and let $(\mathbf{G}^\ast,F^\ast)$ be dual to the pair $(\mathbf{G},F)$. Set $G=\mathbf{G}^F$ and $G^\ast=\mathbf{G}^{\ast F^\ast}$. 
	\begin{itemize}
		\item[(a)] Suppose that $G^\ast$ contains a nontrivial semisimple element $s$ that satisfies condition (\ref{condA}). Then $\OO G$ possesses a non-principal real $2$-block. 
		\item[(b)] Suppose that $G/G_{\text{der}}$ is a $p'$-group. If $G^\ast$ contains a nontrivial semisimple element $s$ that satisfies both conditions (\ref{condA}) and (\ref{condB}), then $\OO[G_{\text{der}}]$ possesses a non-principal real $2$-block.
		\item[(c)] Suppose that $G/G_{\text{der}}$ is a $p'$-group and $|Z(G)|=|G^\ast/(G^\ast)_{\text{der}}|$. If $G^\ast$ contains a nontrivial semisimple element $s$ that satisfies conditions (\ref{condA}), (\ref{condB}), and (\ref{condC}), then $\OO[G_{\text{der}}/(G_{\text{der}}\cap Z(G))]$ possesses a non-principal real $2$-block.
\end{itemize}

\end{proposition}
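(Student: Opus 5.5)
The plan for (a) is to work with the semisimple character $\chi_s\in\mathcal{E}(G,s)$ from \ref{sschars}, which is defined because $C_{\mathbf{G}^\ast}(s)^{F^\ast}\subgp C^\circ_{\mathbf{G}^\ast}(s)$. Since $s$ is real, pick $g\in G^\ast$ with ${}^gs=s^{-1}$. Then ${}^g$ carries the principal character of $C_{\mathbf{G}^\ast}(s)^{F^\ast}$ to the principal character of $C_{\mathbf{G}^\ast}(s^{-1})^{F^\ast}$, which is real. Lemma \ref{lem:Jdecompprops} then gives $\overline{\chi_s}=\chi_s$, so $\chi_s$ is real-valued. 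Because $s$ has odd order, it is a $2'$-element, and $\chi_s\in\mathcal{E}_2(G,s)$. By \ref{blocksGF}, this set is a union of $2$-blocks disjoint from $\mathcal{E}_2(G,1)$, since $s\neq1$ is not conjugate to $1$. The block $b$ containing $\chi_s$ is therefore non-principal, and it is real by Lemma \ref{lem:realblocksfingrps}.

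For (b), restrict $\chi_s$ to $N:=G_{\text{der}}$ and take a constituent $\theta$. I first want to show that $\chi_s$ lies in a block $b$ of $G$ that does not cover the principal block of $N$. By Lemma \ref{lem:blockscoveringprincderived}, the only alternative is that $b$ contains a linear character $\lambda$. Since $G/N$ is a $p'$-group, every linear character is trivial on unipotent elements, so it has the form $\hat z$ for some $z\in Z(G^\ast)$ and lies in $\mathcal{E}(G,z)$. Such a $\lambda$ would lie in $\mathcal{E}_2(G,s)$, so $z=sx$ up to conjugacy with $x$ a $2$-element. Taking $2'$-parts, $s$ would be conjugate to the $2'$-part of $z$, a central element; as $s\neq 1$ is not itself central (condition (\ref{condB}) rules this out, since $s$ central would make $s$ conjugate to $s\cdot s^{-1}s$-type shifts), this gives a contradiction. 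Hence the block of $\theta$ is non-principal.

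The main obstacle in (b) is realness of that block. The constituents of $\Res^G_N\chi_s$ form a single $G$-orbit, and complex conjugation permutes them, so I need one real constituent, or at least a conjugation-stable block of $N$. I would show $\Res_N^G\chi_s$ is irreducible. By Clifford theory, the number of constituents relates to the stabilizer of $\chi_s$ under multiplication by $\Lin(G/N)=\{\hat z\}$, and $\chi_s\hat z=\chi_{sz}$. Condition (\ref{condB}) says exactly that $\mathcal{E}(G,s)\neq\mathcal{E}(G,sz)$ for $z\neq1$, so the stabilizer is trivial. Since $G/N$ is abelian, $\chi_s$ restricts irreducibly to $\theta$, which is real. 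Its block is then real and non-principal.

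For (c), the character $\theta$ should be trivial on $Z:=N\cap Z(G)$, and then Lemma \ref{lem:domination} (with $Z$ central) passes the block to $N/Z$. The block of $N/Z$ containing $\theta$ is real, and it is non-principal because it is dominated by a non-principal block. The point to check is $Z\subgp\ker\theta$. Here I would use that the central character of $\chi\in\mathcal{E}(G,s)$ is determined by the image of $s$ under the duality $Z(G)\cong \Irr(G^\ast/(G^\ast)_{\text{der}})$. That identification is an isomorphism precisely because of the order hypothesis $|Z(G)|=|G^\ast/(G^\ast)_{\text{der}}|$. Condition (\ref{condC}), $s\in(G^\ast)_{\text{der}}$, then forces $\omega_{\chi_s}$ to be trivial on $Z(G)$, so $Z\subgp\ker\theta$.
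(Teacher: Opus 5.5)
Your outline follows the paper's proof almost step for step. In (a) you use the real semisimple character $\chi_s$ (via Lemma \ref{lem:Jdecompprops}) and get non-principality from $\mathcal{E}_2(G,s)$. In (b) you get irreducibility of $\Res^G_{G_{\text{der}}}\chi_s$ from condition (\ref{condB}) and the isomorphism $z\mapsto\hat z$, and use Lemma \ref{lem:blockscoveringprincderived} to exclude covering the principal block. In (c) you use the fact that $Z(G)\subgp\ker\chi_s$ when $s\in(G^\ast)_{\text{der}}$. The paper takes that last fact from Navarro--Tiep, Lemma 4.4(ii); you sketch it via the duality $Z(G)\cong\Irr(G^\ast/(G^\ast)_{\text{der}})$. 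However, one step in (b) has a wrong justification.

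\textbf{The step that fails.} You reach $s=z_{2'}\in Z(G^\ast)$ and then say condition (\ref{condB}) rules out $s$ being central. It cannot. If $s$ is central, its conjugacy class is $\{s\}$, and $sz\neq s$ for every $1\neq z\in Z(G^\ast)$. So every central element satisfies (\ref{condB}) vacuously, and the contradiction must come from somewhere else.

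\textbf{The fix.} Use condition (\ref{condA}) instead. If $s$ is real and central, then $s^{-1}={}^gs=s$ for a suitable $g\in G^\ast$, so $s^2=1$. Since $s$ has odd order, this forces $s=1$, contradicting $s\neq 1$. This is exactly how the paper concludes, and with this replacement your argument for (b) is complete.

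\textbf{Minor points (not errors).}
\begin{itemize}
\item In (b) you only need $\chi_s\hat z\in\mathcal{E}(G,sz)$, not the stronger $\chi_s\hat z=\chi_{sz}$.
\item In (c) Lemma \ref{lem:domination} is not needed. The block of $\bar\theta$ lies in a unique block of $N$, namely the block $c$ of $\theta$. If the block of $\bar\theta$ were principal, $c$ would contain $1_N$ and so be principal, which is a contradiction.
\end{itemize}
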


\begin{proof}
	Let $1\neq s\in G^\ast$ be a semisimple element that satisfies condition (\ref{condA}) of Definition \ref{defn:conditions}. Then $s$ is a real $2'$-element of $G^\ast$ and $C_{G^\ast}(s)\subgp C_{\mathbf{G}^\ast}^\circ(s)$. Let $\chi_s\in\mathcal{E}(G,s)$ denote the corresponding semisimple character (see \ref{sschars} above). Since $s$ is real, Lemma \ref{lem:Jdecompprops} implies that $\chi_s$ is real-valued. Let $b$ denote the unique $2$-block of $\OO G$ such that $\chi_s\in\Irr(b)$. Then $b$ is real by Lemma \ref{lem:realblocksfingrps}. Since $\chi_s\in\mathcal{E}(G,s)\subseteq\mathcal{E}_2(G,s)$ we have $\Irr(b)\subseteq\mathcal{E}_2(G,s)$. Then, since $s\neq 1$, the remarks of \ref{blocksGF} above imply that $b$ is a non-principal $2$-block. Thus, (a) holds.
	
	Now assume that $G/G_{\text{der}}$ is a $p'$-group and that the element $s\in G^\ast$ of the previous paragraph is not $G^\ast$-conjugate to any element of the form $sz$, for $1\neq z\in Z(G^\ast)$. Note that $Z(G^\ast)=Z(\mathbf{G}^\ast)^{F^\ast}$ since $F$ is a Frobenius endomorphism (cf. \cite[Prop.~3.6.8]{Carter_1985}). Recall from \ref{Lusztigseries} the map $Z(G^\ast)\to\Lin(G)$, $z\mapsto\hat{z}$. As shown in \cite[Prop.~2.5.20]{Geck_2020}, this map is in fact an injective homomorphism with image equal to the unique largest $p'$-subgroup of the abelian group $\Lin(G)$. But by assumption $G/G_{\text{der}}\iso\Lin(G)$ is a $p'$-group; hence, the map $z\mapsto\hat{z}$ is an isomorphism between $Z(G^\ast)$ and the group of linear characters of $G$. 
	
	We claim that $\chi_s\cdot \lambda\neq\chi_s$ for all nontrivial linear characters $\lambda$ of $G$. Indeed, suppose $1\neq\lambda\in\Lin(G)$ is such that $\chi_s\cdot\lambda=\chi_s$. Let $z$ be the unique (necessarily nontrivial) element of $Z(G^\ast)$ such that $\lambda=\hat{z}$. Then $\chi_s\cdot\hat{z}=\chi_s$. But then, by the remarks of \ref{Lusztigseries}, the character $\chi_s$ belongs to the Lusztig series $\mathcal{E}(G,sz)$ as well as to $\mathcal{E}(G,s)$. It follows that $s$ is $G^\ast$-conjugate to $sz$, contrary to our assumption. Thus, the claim holds.
	
	For ease, set $N=G_{\text{der}}$ and $\psi_s=\Res_N^G\chi_s$. A well-known consequence of the claim proved in the previous paragraph is that the character $\psi_s$ is irreducible. Let $c$ be the unique $2$-block of $\OO N$ to which $\psi_s$ belongs. Since $\psi_s$ is real-valued, the block $c$ is real by Lemma \ref{lem:realblocksfingrps}. Suppose that $c=c_0$ is the principal block of $\OO N$. Then since the block $b$ of $\OO G$ covers $c$, Lemma \ref{lem:blockscoveringprincderived} implies that some linear character of $G$ belongs to $b$. Hence there exists $z\in Z(G^\ast)$ such that $\hat{z}\in\Irr(b)$. Then since $\Irr(b)\subseteq\mathcal{E}_2(G,s)$ there exists a $2$-element $x\in C_{G^\ast}(s)$ such that $\hat{z}\in\mathcal{E}(G,sx)$. Since $\hat{z}$ already belongs to $\mathcal{E}(G,z)$, it follows that $sx$ is $G^\ast$-conjugate to $z$. But $z$ is central, so in fact $sx=z$. Taking the $2'$-parts of either side of this equality, we find that $s=z_{2'}\in Z(G^\ast)$. Then $s$ is both a real and a central element of $G^\ast$, hence $s=s^{-1}$. This can only occur if $s=1$ or $s^2=1$, against our assumptions. Thus we find that $c$ is non-principal and (b) holds. 
	
	Continuing with all of the notation set above, we now establish (c). We still assume that $G/G_{\text{der}}$ is a $p'$-group, but now we also assume that $|Z(G)|=|G^\ast/(G^\ast)_{\text{der}}|$ and that the element $s\in G^\ast$ of the previous paragraphs belongs to $(G^\ast)_{\text{der}}$, i.e., satisfies condition (\ref{condC}) of Definition \ref{defn:conditions}. By \cite[Lem.~ 4.4(ii)]{Navarro_2013} the center $Z(G)$ is contained in the kernel of the character $\chi_s$. It follows that $N\cap Z(G)$ is a subgroup of $\ker\psi_s$. Let bars denote passage to the quotient $\bar{N}:=N/(N\cap Z(G))$. Then $\psi_s$ deflates to an irreducible real-valued character $\bar{\psi_s}$ of $\bar{N}$ defined by $\bar{\psi_s}(\bar{n})=\psi_s(n)$ for all $n\in N$. Let $d\in\Bl(\OO\bar{N})$ be such that $\bar{\psi_s}\in\Irr(d)$. Then the block $c$ of $\OO N$ dominates $d$. Note that $d$ is real by Lemma \ref{lem:realblocksfingrps}. The block $d$ must be non-principal, as otherwise $1_N\in\Irr(c)$, a contradiction. Therefore $\OO\bar{N}$ possesses a non-principal real $2$-block, completing the proof.
\end{proof}

\subsection{Linear and Unitary Groups}\label{subsec:linandunitary}

In this subsection we assume $n\geq 2$ and that $q$ is a positive integral power of an odd prime $p$. Let $\mathbf{G}=\GL_n(k)$, let $F_q$ denote the standard Frobenius endomorphism of $\mathbf{G}$ induced by the map $x\mapsto x^q$ on $k$, and let $\epsilon\in\set{\pm 1}$. Depending on the value of $\epsilon$, we let $F$ be one of two Frobenius endomorphisms of $\mathbf{G}$:
\begin{equation*}
	F=\begin{cases}
		F_q	&\text{if }\epsilon=1\\
		F_q\circ\gamma &\text{if }\epsilon=-1.
	\end{cases}
\end{equation*}
Here $\gamma:\mathbf{G}\to\mathbf{G}$ maps a matrix $g$ to $j_n(g^{\tr})^{-1}j_n$, where $j_n$ is the involutive permutation matrix
\begin{equation*}
	j_n:=\begin{pmatrix}
		0 & \cdots & 0 & 1\\
		\vdots & \iddots & \iddots & 0\\
		0 & 1 & \iddots & \vdots\\
		1 & 0 & \cdots & 0
	\end{pmatrix}.
\end{equation*}
Let
\begin{equation*}
	\GL_n(\epsilon q):=\mathbf{G}^F\qquad\text{and}\qquad \SL_n(\epsilon q):=(\mathbf{G}_{\text{der}})^F.
\end{equation*}
Note that $\GL_n(-q)=\GU_n(q)$ and $\SL_n(-q)=\SU_n(q)$ are, respectively, the finite general and special unitary groups. Recall that $\GL_n(-q)\subgp\GL_n(q^2)$ and $\SL_n(-q)=\GL_n(-q)\cap\SL_n(q^2)$. Finally, let
\begin{equation*}
	\PSL_n(\epsilon q):=\SL_n(\epsilon q)/Z(\SL_n(\epsilon q)).
\end{equation*}

\subsubsection{Additional Considerations for $\GL_n(\epsilon q)$}\label{maxmlysplittorusgl}\label{factsaboutgl}
	The pair $(\mathbf{G},F)$ is self-dual for  $\epsilon\in\{\pm1\}$ (cf. \cite[Example 1.5.21(a)]{Geck_2020}). Further, because $Z(\mathbf{G})$ is connected, the centralizer $C_{\mathbf{G}}(s)$ of any semisimple element $s\in \GL_n(\epsilon q)$ is connected. Since $q$ is odd, the derived subgroup of $\GL_n(\epsilon q)$ is equal to $\SL_n(\epsilon q)$. The center $Z(\GL_n(\epsilon q))$ and the quotient $\GL_n(\epsilon q)/\SL_n(\epsilon q)$ are both cyclic of order $q-\epsilon$.

	Let $\mathbf{T}_0$ denote the subgroup of diagonal matrices in $\mathbf{G}$. Then $\mathbf{T}_0$ is a maximally $F$-split torus. The Weyl group $\mathbf{W}=N_{\mathbf{G}}(\mathbf{T}_0)/\mathbf{T}_0$ is isomorphic to the subgroup of permutation matrices in $\mathbf{G}$, which in turn is isomorphic to the symmetric group $S_n$ of degree $n$.

\subsubsection{The Results for Type $\type{A}$}
\begin{lemma}\label{lem:typea}
	The group $\mathbf{G}^F=\GL_n(\epsilon q)$ possesses a nontrivial semisimple element $s$ satisfying conditions (\ref{condA}) and (\ref{condC}) of Definition \ref{defn:conditions} unless $(n,\epsilon q)\in\set{(2,\pm 3),(3,\pm 3)}.$
	
	Further, $\mathbf{G}^F=\GL_n(\epsilon q)$ possesses a nontrivial semisimple element $s$ satisfying conditions (\ref{condA}), (\ref{condB}), and (\ref{condC}) unless $(n,\epsilon q)\in\set{(2,\pm 3),(3,\pm 3),(3,-5),(3,7)}.$
\end{lemma}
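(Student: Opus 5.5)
The plan is to exhibit explicit semisimple elements of $\GL_n(\epsilon q)$ by prescribing their eigenvalue multisets. Since $Z(\mathbf{G})$ is connected, $C_{\mathbf{G}}(s)$ is always connected (see \ref{factsaboutgl}), so the centralizer part of (\ref{condA}) is automatic. By Lemma~\ref{lem:realelts}, reality of $s$ in $\GL_n(\epsilon q)$ reduces to $\mathbf{G}$-conjugacy of $s$ and $s^{-1}$, i.e.\ to the eigenvalue multiset (with multiplicities) being closed under inversion. If the eigenvalues are of the form $\{a_1,a_1^{-1},\dots,a_r,a_r^{-1},1,\dots,1\}$, then $\det s=1$, so (\ref{condC}) holds since $\SL_n(\epsilon q)=(\GL_n(\epsilon q))_{\text{der}}$. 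So I only need an element of odd order with such an eigenvalue pattern that is rational over $F$. I will build it as a block-diagonal element inside a Levi-type subgroup $\GL_m(\epsilon q)\times\GL_{n-m}(\epsilon q)$, using the identity on the second factor and a real odd-order element of a suitable maximal torus of $\GL_m(\epsilon q)$ on the first, with $m\in\{2,4\}$.

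\emph{Step 1 (case $m=2$).} The maximal tori of $\GL_2(\epsilon q)$ have orders $(q-\epsilon)^2$ and $q^2-1$ (for $\epsilon=\pm1$). They contain real elements with eigenvalues $\{a,a^{-1}\}$:
\begin{itemize}
\item for $a\in\mu_{q-\epsilon}$, take the split torus, i.e.\ $\diag(a,a^{-1})$;
\item for $a\in\mu_{q+\epsilon}$, take the Coxeter torus, where the Frobenius orbit of $a$ is $\{a,a^{-1}\}$.
\end{itemize}
Choosing $a$ of odd order $>1$ is possible exactly when $q^2-1$ is not a power of $2$, i.e.\ when $q\neq 3$. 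This handles every $n\geq2$ with $q\ne3$, taking $s=\diag(a,a^{-1},1,\dots,1)$ up to conjugacy.

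\emph{Step 2 (case $q=3$, $m=4$).} Here $q^2+1=10$, so $\mu_5\subseteq\F_{81}^\times$. An element $a$ of order $5$ has Frobenius orbit $\{a,a^{3},a^{9}=a^{-1},a^{27}=a^{-3}\}$, which is inverse-closed. This orbit gives a real element of order $5$ in a cyclic torus of $\GL_4(\pm3)$; for $\epsilon=-1$, use the torus of order $q^4-1$ attached to a $4$-cycle twisted by $\sigma$, whose eigenvalue orbit is the same set. This settles $n\geq4$ when $q=3$. For $n\in\{2,3\}$ and $q=3$, every odd-order semisimple element lives in tori of orders dividing $2^a$ or $q^3-\epsilon=26,28$. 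The order $13$ or $7$ elements there have eigenvalue orbits $\{a,a^{\epsilon q},a^{q^2}\}$, and these are not inverse-closed because $-1$ is not a power of $\epsilon q$ modulo $13$ resp.\ $7$. So these are genuine exceptions. This is routine to confirm.

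\emph{Step 3 (condition (\ref{condB})).} Suppose $sz$ is conjugate to $s$ for some $z\in Z(\GL_n(\epsilon q))\cong\mu_{q-\epsilon}$; then multiplication by $z$ permutes the eigenvalue multiset.
\begin{itemize}
\item For $n\geq4$, the eigenvalue $1$ occurs with multiplicity at least $n-2\geq2$ (Step 1) or $n-4$, plus a free orbit (Step 2). Comparing multiplicities, and using that $a$ has odd order so $a\neq a^{-1}$, forces $z=1$. For $n=4$, $q=3$, note that $z\in\mu_{4}$ or $\mu_2$ cannot map an order-$5$ orbit to itself.
\item For $n=2$, $z\{a,a^{-1}\}=\{a,a^{-1}\}$ forces $z=1$, or $a^2=-1$, or $a^4=1$, all excluded by odd order.
\item For $n=3$, the relation $z\{a,a^{-1},1\}=\{a,a^{-1},1\}$ with $z\neq1$ forces $z\in\{a,a^{-1}\}$ and $a^3=1$, so $a$ must be central of order $3$. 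Thus (\ref{condB}) fails only if every admissible odd-order $a\in\mu_{q-\epsilon}\cup\mu_{q+\epsilon}$ has order $3$ and $3\mid q-\epsilon$.
\end{itemize}

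I expect Step 3 for $n=3$ to be the main obstacle. Avoiding $a$ of order exactly $3$ means using a prime $r\geq5$ dividing $q^2-1$, or an element of order $9$. The latter is harmless: $\{za,za^{-1},z\}=\{a,a^{-1},1\}$ would force $z=a^{\pm1}$, which has order $9$. Failure therefore occurs only when $q^2-1=2^a\cdot3$ with $3\mid q-\epsilon$. Since $q^2-1=(q-1)(q+1)$ with both factors even, this gives $q\in\{5,7\}$ with the $3$ dividing $q-\epsilon$, i.e.\ $(3,-5)$ and $(3,7)$. I would then check that no other real odd-order class works for these two groups. The only remaining candidates lie in the Coxeter torus of order $q^3-\epsilon$, whose eigenvalue orbits $\{a,a^{\epsilon q},a^{q^2}\}$ are inverse-closed only if $a^{q+\epsilon}=1$ or similar, which reduces back to the previous tori. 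Together with Steps 1 and 2, this yields exactly the exception lists in the statement.
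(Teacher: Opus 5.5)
Your proposal is correct and follows essentially the same route as the paper: the eigenvalue pattern $\{a,a^{-1},1,\dots,1\}$ with $a$ of odd order in $\mu_{q-\epsilon}\cup\mu_{q+\epsilon}$ (realized in a suitable $F$-stable torus, real by Lemma \ref{lem:realelts}), the order-$5$ Frobenius orbit in $\F_{81}^\times$ for $q=3$, $n\geq 4$, and eigenvalue comparison for (\ref{condB}), with $n=3$ reducing to $q\in\{5,7\}$. One small correction: for $q=3$, $n=5$ the eigenvalue $1$ has multiplicity one, so the multiplicity comparison fails there, and you need the ``center is a $2$-group, $s$ has odd order'' argument you give only for $n=4$ (this is exactly how the paper finishes). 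Your extra check that the listed pairs are genuine exceptions is not part of the paper's proof of this lemma: the paper treats $(2,\pm3),(3,\pm3)$ in Corollary \ref{cor:typea}, and for $(3,-5),(3,7)$ it uses a GAP block count instead.
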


\begin{proof}
	Assume that $(n,\epsilon q)\notin\set{(2,\pm 3),(3,\pm 3)}$. We must demonstrate that $\bG^F=\GL_n(\epsilon q)$ possesses a nontrivial semisimple element $s$ that is real, has odd order, and has determinant 1. Indeed, an element $s$ with these properties clearly satisfies condition (\ref{condC}) of Definition \ref{defn:conditions}, and satisfies condition (\ref{condA}) because the containment $C_{\mathbf{G}}(s)^F\subgp C_{\mathbf{G}}^\circ(s)=C_{\mathbf{G}}(s)$ is guaranteed (see \ref{factsaboutgl}). If in addition $(n,\epsilon q)\notin\set{(3,-5),(3,7)}$ then we must show that there exists such an element $s$ that also satisfies condition (\ref{condB}), i.e., such that $s$ is not $\GL_n(\epsilon q)$-conjugate to any element of the form $sz$, for $1\neq z\in Z(\GL_n(\epsilon q))$.
	
	First, suppose that there exists an odd number $m\neq 1$ that divides $q^2-1$.
    Let $\lambda\in\F_{q^2}^\times$ be a primitive $m$th root of unity. Then there exists a matrix $s\in \bG^F$ that is $\mathbf{G}$-conjugate to the diagonal matrix $d=\diag(\lambda,\lambda^{-1},1,\ldots,1)$. (Indeed, if $j$ denotes the image of the matrix $j_n$ in the Weyl group $\mathbf{W}$ and $w\in\mathbf{W}$ corresponds to the $2$-cycle $(1\,2)$, then $d\in\mathbf{T}_0[j^{\epsilon_0}]$ (see \ref{toritype}) if $m$ divides $q-\epsilon$ and $d\in\mathbf{T}_0[w\cdot j^{\epsilon_0}]$ if $m$ divides $q+\epsilon$, where $\epsilon_0=0$ if $\epsilon=1$ and $\epsilon_0=1$ if $\epsilon=-1$.)
	
	Then $s$ is a nontrivial semisimple $2'$-element of $\bG^F$ with $\det(s)=1$. Since $s$ is $\mathbf{G}$-conjugate to $s^{-1}$, Lemma \ref{lem:realelts} implies that $s$ is a real element of $\bG^F$. Therefore $s$ satisfies conditions (\ref{condA}) and (\ref{condC}) of Definition \ref{defn:conditions}. Assume that $(n,\epsilon q)\notin\set{(3,-5),(3,7)}$. We claim that $s$ also satisfies condition (\ref{condB}).
	
	Let $z\in Z(\bG^F)$ and suppose that $sz$ is $\bG^F$-conjugate to $s$. To prove the claim we must show that $z=1$. Since they are conjugate, note that $s$ and $sz$ have the same eigenvalues, including multiplicities. The matrix $s$ has (distinct) eigenvalues $\lambda$, $\lambda^{-1}$, and $1$ with corresponding eigenspaces of dimension $1$, $1$, and $n-2$, respectively. On the other hand, if $\zeta$ denotes the unique eigenvalue of $z$, then the eigenvalues of $sz$ are $\lambda\zeta$, $\lambda^{-1}\zeta$, and $\zeta$ with corresponding eigenspaces of dimension $1$, $1$, and $n-2$. Now, if $n\geq 4$ then comparing the eigenvalues with multiplicity $n-2>1$ shows that $\zeta=1$, and the claim follows. If $n=2$ then, comparing eigenvalues again, we must have $\lambda=\lambda\zeta$ or $\lambda=\lambda^{-1}\zeta$. But if $\lambda=\lambda^{-1}\zeta$ then the eigenvalue $\lambda^{-1}$ of $s$ must equal the eigenvalue $\lambda\zeta$ of $sz$, in which case $\lambda^{-2}=\zeta=\lambda^2$, contradicting the fact that the order of $\lambda$ is odd. So we must have $\lambda=\lambda\zeta$, hence $\zeta=1$ and the claim holds in this case. 
	
	To complete the proof of the claim it remains to consider the case when $n=3$. Suppose, in addition, that $m>3$. The eigenvalue $\lambda$ of $s$ must be equal to one of $\lambda\zeta$, $\lambda^{-1}\zeta$, or $\zeta$. Say $\lambda=\lambda^{-1}\zeta$. Then $\zeta=\lambda^2$ is an eigenvalue of $sz$, hence is an eigenvalue of $s$. Therefore, $\lambda^2$ is equal to $1$ or $\lambda^{-1}$. But neither equality is possible, since $\lambda$ has odd order $m>3$. Thus, $\lambda\neq\lambda^{-1}\zeta$. By a similar argument, $\lambda\neq\zeta$. We must have $\lambda=\lambda\zeta$, hence $\zeta=1$ and the claim holds. We are reduced to the case where $n=3$ and there does not exist an odd number $m>3$ that divides either $q-1$ or $q+1$. An elementary argument shows that this is only possible if $q\in\set{3,5,7}$. Our assumption on $(n,\epsilon q)$ then forces $\epsilon q=5$ or $\epsilon q=-7$. In either case, the center of $\GL_n(\epsilon q)$ is a $2$-group. Since $|s|=|sz|=|s|\cdot|z|$, this yields $z=1$.
	
	We may now assume that there does not exist an odd number $m>1$ that divides $q-1$ or $q+1$. Note that this can only occur if $q=3$. By our assumptions on $(n,\epsilon q)$, we must have $n\geq 4$. Arguing as before, there exists a nontrivial semisimple $s\in\GL_n(\epsilon\cdot 3)$ that is $\mathbf{G}$-conjugate to the diagonal matrix $\diag(\lambda,\lambda^2,\lambda^3,\lambda^4,1,\ldots,1)$, where $\lambda\in\F_{3^4}^{\times}$ is a primitive $5$th root of unity. By construction, the element $s$ has odd order and determinant 1. Since $s$ is $\mathbf{G}$-conjugate to $s^{-1}$, Lemma \ref{lem:realelts} implies that $s$ is a real element of $\GL_n(\epsilon\cdot 3)$. Finally, since $Z(\GL_n(\epsilon\cdot 3))$ is a $2$-group, if $z\in Z(\GL_n(\epsilon\cdot 3))$ is such that $s$ is conjugate to $sz$ then $|s|=|sz|=|s|\cdot|z|$, which implies that $z=1$. We conclude that the element $s$ satisfies all of the desired conditions, and the proof is complete.
\end{proof}

\begin{corollary}\label{cor:typea}
	Keep all the notation set above; in particular, $q$ is an integral power of an odd prime $p$, $n\geq 2$, and $\epsilon\in\set{\pm 1}$. Let $G\in \{\GL_n(\epsilon q), \SL_n(\epsilon q), \PSL_n(\epsilon q)\}$. Then $\OO G$ has no non-principal real $2$-block if and only if $(n,\epsilon q)\in\set{(2,\pm 3),(3,\pm 3)}$.
\end{corollary}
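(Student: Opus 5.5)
The plan is to combine Lemma \ref{lem:typea} with Proposition \ref{prop:main} for the generic cases, and to settle the remaining finitely many groups by direct computation.

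\textbf{Checking the hypotheses of Proposition \ref{prop:main}.} By \ref{factsaboutgl}, the pair $(\mathbf{G},F)$ is self-dual, so $G^\ast\iso\GL_n(\epsilon q)$. Moreover $G_{\text{der}}=\SL_n(\epsilon q)$, and both $Z(G)$ and $G/G_{\text{der}}$ are cyclic of order $q-\epsilon$. Since $p\nmid q-\epsilon$, the quotient $G/G_{\text{der}}$ is a $p'$-group. Also $|Z(G)|=q-\epsilon=|G^\ast/(G^\ast)_{\text{der}}|$. Finally, $G_{\text{der}}/(G_{\text{der}}\cap Z(G))=\SL_n(\epsilon q)/Z(\SL_n(\epsilon q))=\PSL_n(\epsilon q)$, because $Z(\SL_n(\epsilon q))=\SL_n(\epsilon q)\cap Z(\GL_n(\epsilon q))$ consists of the scalar matrices of determinant $1$.

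\textbf{The ``if'' direction.} Suppose $(n,\epsilon q)\notin\set{(2,\pm3),(3,\pm3)}$. Lemma \ref{lem:typea} gives an element $s$ satisfying (\ref{condA}). Proposition \ref{prop:main}(a) then yields a non-principal real $2$-block of $\OO[\GL_n(\epsilon q)]$. If in addition $(n,\epsilon q)\notin\set{(3,-5),(3,7)}$, the second part of Lemma \ref{lem:typea} gives $s$ satisfying (\ref{condA}), (\ref{condB}) and (\ref{condC}). Proposition \ref{prop:main}(b) and (c) then handle $\SL_n(\epsilon q)$ and $\PSL_n(\epsilon q)$.

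\textbf{The residual groups $\SU_3(5)$, $\SL_3(7)$, $\PSU_3(5)$, $\PSL_3(7)$.} These are the main obstacle, since condition (\ref{condB}) genuinely fails for the natural choices of $s$ there. I would treat them directly, either with GAP \cite{GAP} or from the known generic character tables of $\SL_3(q)$ and $\SU_3(q)$. In each group I would exhibit a real-valued irreducible character lying outside the principal $2$-block; a real character of $2$-defect zero would be the ideal candidate. By Lemma \ref{lem:realblocksfingrps}, its block is then real and non-principal. If it is convenient, I would choose this character trivial on the centre (of order $3$). Lemma \ref{lem:domination} then transfers the block between the simple group and its cover: the dominated block of the quotient is non-principal because it contains a non-principal character of the quotient.

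\textbf{The ``only if'' direction.} It remains to show there is no non-principal real $2$-block in the excluded cases. For $n=2$ the groups are
$$\GL_2(3),\ \SL_2(3),\ \PSL_2(3)\iso A_4,\qquad \GU_2(3),\ \SU_2(3)\iso\SL_2(3),\ \PSU_2(3)\iso A_4.$$
For $n=3$ they are $\GL_3(3)$, $\GU_3(3)$ and $\SL_3(3)=\PSL_3(3)$, $\SU_3(3)=\PSU_3(3)$; the centres of the latter are trivial because $\gcd(3,q-\epsilon)=1$ for $q=3$. All of these are small, and I would verify by GAP that every $2$-block other than the principal one contains a character together with its non-equal complex conjugate. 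For instance, in $A_4$ the two nontrivial linear characters form a conjugate pair. Equivalently, I would check that each real character lies in the principal $2$-block.
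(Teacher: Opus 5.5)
Your treatment of the generic cases is the paper's. You use Lemma \ref{lem:typea} with Proposition \ref{prop:main}(a) for $\GL_n(\epsilon q)$, and with (b), (c) for $\SL_n(\epsilon q)$ and $\PSL_n(\epsilon q)$ away from $(3,-5),(3,7)$. Your check of the hypotheses of Proposition \ref{prop:main} is correct. The two routes differ only on the finitely many leftover groups.

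\textbf{Residual groups.} For $\SU_3(5)$, $\SL_3(7)$ and their quotients, the paper uses parity. GAP shows each has an even number of $2$-blocks, and complex conjugation is an involution on blocks fixing the principal one, so it fixes some non-principal block. Your plan of exhibiting a real character outside the principal block also works, but not via your ``ideal candidate''. In these four groups every defect-zero character lies under some $\chi_s$ of $\GL_3(\epsilon q)$ with $C_{\GL_3(\epsilon q)}(s)$ cyclic of order $q^3-\epsilon$. None of these is real, since $s^{-1}$ is never conjugate to $sz$ for central $z$.

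The advantage of your route is that it can be done by hand. Take $s=\diag(\lambda,\lambda^{-1},1)$ with $|\lambda|=3$. Exactly three linear characters $\hat z$ fix $\chi_s$, namely those with $z\in\{1,\lambda,\lambda^2\}\cdot\id_3$. So $\chi_s$ restricts to three constituents on $\SL_3(\epsilon q)$, which complex conjugation permutes, and one of them is real. The covering argument in the proof of Proposition \ref{prop:main}(b) uses only (\ref{condA}), not (\ref{condB}), so this constituent lies outside the principal block. It is trivial on the centre, as in (c).

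Its deflation lies in a non-principal block of $\PSL_3(\epsilon q)$. The reason is that the principal block of the quotient inflates into the principal block of $\SL_3(\epsilon q)$. Containing a nontrivial character, as your sentence suggests, would not be enough.

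\textbf{Excluded $\GL_n(\epsilon q)$.} The paper also avoids GAP here. By Fong--Srinivasan and Brou\'e, each $\mathcal{E}_2(\GL_n(\epsilon q),s)$ is a single block. Hence real non-principal blocks correspond to nontrivial real odd-order semisimple classes, and an eigenvalue check shows there are none.

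\textbf{The stated check is wrong.} The criterion you give for the excluded groups needs correcting. A non-principal block $b$ is non-real exactly when $\bar b\neq b$, that is, when some $\chi\in\Irr(b)$ has $\bar\chi\notin\Irr(b)$. A block containing both $\chi$ and $\bar\chi\neq\chi$ is perfectly compatible with being real, so your first formulation checks the wrong thing.

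The $A_4$ example is also beside the point. $A_4$ has a self-centralizing normal Sylow $2$-subgroup, so it has a single $2$-block, and its conjugate pair of linear characters lies in the principal block.

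Finally, ``every real character lies in the principal block'' excludes non-principal real blocks only if you also know that a real $2$-block contains a real irreducible character. Here that is harmless. The groups $\GL_2(\pm3)$ and $\SL_2(\pm3)$ have one $2$-block. The non-principal blocks of $\SL_3(\pm3)$ have defect zero. Those of $\GL_3(\pm3)$ are $\mathcal{E}_2(G,s)$ with $s$ non-real. Still, what you should have GAP confirm is $\bar b\neq b$ for every non-principal $b$.

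Your ``if'' and ``only if'' labels are also interchanged relative to the statement.
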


\begin{proof}
	The blocks of $\GL_n(\epsilon q)$ are described by Fong and Srinivasan in \cite{FS82}. While the results there are stated for odd $\ell$, analogous statements hold for the case $\ell=2$, as pointed out by Brou\'{e} in \cite{Broue_1986}. In particular, from this we see that the sets of $2$-blocks of $\OO[\GL_n(\epsilon q)]$ are in bijection with the conjugacy classes of semisimple $2'$-elements of $\GL_n(\epsilon q)$. In other words, if $s\in\GL_n(\epsilon q)$ is a semisimple $2'$-element, then $\mathcal{E}_2(\GL_n(\epsilon q),s)$ (as defined in \ref{blocksGF}) is equal to $\Irr(b_s)$ for a uniquely determined $2$-block $b_s$ of $\OO[\GL_n(\epsilon q)]$. From this fact and part (a) of Proposition \ref{prop:main}, we find that $\OO[\GL_n(\epsilon q)]$ has a non-principal real $2$-block if and only if there exists a nontrivial semisimple element $s\in\GL_n(\epsilon q)$ that is real and has odd order, i.e., satisfies condition (\ref{condA}) of Definition \ref{defn:conditions}. 
	
	Now, if $(n,\epsilon q)\in\set{(2,\pm 3),(3,\pm 3)}$ then $\GL_n(\epsilon q)$ does not possess a nontrivial semisimple real $2'$-element $s$. Indeed, $\GL_2(\pm 3)$ cannot possess such an element since the only primes dividing its order are $2$ and $3$. If $s\in\GL_3(3)$ is a nontrivial semisimple $2'$-element, then $s$ must be $\mathbf{G}$-conjugate to a diagonal matrix of the form $\diag(\lambda,\lambda^3,\lambda^9)$, where $\lambda\in\F_{3^3}^\times$ has order 13 and $\bG=\GL_n(k)$. Note that such an element $s$ is not real. Likewise, if $s\in\GL_3(-3)$ is a nontrivial semisimple $2'$-element, then $s$ must be $\mathbf{G}$-conjugate to a diagonal matrix $\diag(\lambda,\lambda^2,\lambda^4)$ where $\lambda\in\F_{3^6}^\times$ has order 7, which again cannot be real. It follows from the remarks of the previous paragraph that if $(n,\epsilon q)\in\set{(2,\pm 3),(3,\pm 3)}$, then $\OO[\GL_n(\epsilon q)]$ has no non-principal real $2$-block. On the other hand, if $(n,\epsilon q)\notin\set{(2,\pm 3),(3,\pm 3)}$ then Lemma \ref{lem:typea} implies that $\OO[\GL_n(\epsilon q)]$ has a non-principal real $2$-block. Thus, the statement for $\GL_n(\epsilon q)$ is established.

    Now, if $(n,\epsilon q)\notin\set{(2,\pm 3),(3,\pm 3),(3,-5),(3,7)}$ then both $\OO[\SL_n(\epsilon q)]$ and $\OO[\PSL_n(\epsilon q)]$ have non-principal real $2$-blocks by Lemma \ref{lem:typea} and Proposition \ref{prop:main}. Each of the group algebras $\OO[\SL_3(-5)]$, $\OO[\PSL_3(-5)]$, $\OO[\SL_3(7)]$, and $\OO[\PSL_3(7)]$ have an even number of $2$-blocks, which can be seen from GAP \cite{GAP}, so each must possess a non-principal real $2$-block. Finally, the group algebras $\OO[\SL_2(\pm 3)]$ and $\OO[\PSL_2(\pm 3)]$ have only one $2$-block (the principal block), and all non-principal $2$-blocks of the group algebras $\OO[\SL_3(\pm 3)]=\OO[\PSL_3(\pm 3)]$ have defect zero and consist of non-real irreducible characters. Hence, none of these group algebras possess a non-principal real $2$-block, completing the proof.
\end{proof}

\subsection{Symplectic and Odd-Dimensional Orthogonal Groups}

We continue to assume that $n\geq 2$ and that $q$ is a positive integral power of an odd prime $p$. Let
\begin{equation*}
	i_{2n}:=\begin{pmatrix}
		& j_n\\
		-j_n &
	\end{pmatrix}
\end{equation*}
where $j_n$ is the matrix defined in Subsection \ref{subsec:linandunitary}. Then
\begin{equation*}
	\Sp_{2n}(k)=\set{g\in\GL_{2n}(k)|g^{\tr}i_{2n}g=i_{2n}}
\end{equation*}
is the symplectic group of dimension $2n$ and
\begin{equation*}
	\SO_{2n+1}(k)=\set{g\in\SL_{2n+1}(k)|g^{\tr}j_{2n+1}g=j_{2n+1}}
\end{equation*}
is the special orthogonal group of dimension $2n+1$. 

By abuse of notation, we will use $F$ to denote the standard Frobenius endomorphism of either $\Sp_{2n}(k)$ or $\SO_{2n+1}(k)$, so that $\Sp_{2n}(k)^F=\Sp_{2n}(q)$ and $\SO_{2n+1}(k)^F=\SO_{2n+1}(q)$. Note that the pairs $(\Sp_{2n}(k),F)$ and $(\SO_{2n+1}(k),F)$ are dual to one another.

\subsubsection{Tori and Weyl Groups in Type $\type{B}$ and $\type{C}$}(cf. \cite[Prop.~11.4.3]{Carter_1985})\label{weyltypebandc}
	Let $\bG$ be $\Sp_{2n}(k)$ or $\SO_{2n+1}(k)$ and let $\mathbf{T}_0$ denote the intersection of $\bG$ with the diagonal torus of $\GL_{2n}(k)$, resp. $\GL_{2n+1}(k)$. Then $\mathbf{T}_0$ is a maximally $F$-split torus of $\bG$. Indeed, $\mathbf{T}_0$ is contained in the intersection of $\bG$ with the subgroup of upper-triangular matrices in $\GL_{2n}(k)$, resp. $\GL_{2n+1}(k)$, which is an $F$-stable Borel subgroup of $\bG$. The Weyl group $\mathbf{W}$ of $\bG$ is isomorphic to the wreath product $C_2\wr S_n$ defined with respect to the natural action of the symmetric group, hence is of order $2^n\cdot n!$.

\subsubsection{Useful Subgroups of $\Sp_{2n}(q)$ and $\SO_{2n+1}(q)$}\label{factsaboutsp}\label{factsaboutsoodd}
	As $n\geq 2$ and $q$ is odd, the symplectic group $\Sp_{2n}(q)$ is perfect and $Z(\Sp_{2n}(q))=\set{\pm\id_{2n}}$. The projective symplectic group is the quotient $$\PSp_{2n}(q):=\Sp_{2n}(q)/Z(\Sp_{2n}(q)).$$
	The center of $\SO_{2n+1}(q)$ is trivial, and the derived subgroup $$\Omega_{2n+1}(q):=\SO_{2n+1}(q)_{\text{der}}$$ is an index 2 subgroup of $\SO_{2n+1}(q)$.
	It will also be useful to note in what follows that we have injective group homomorphisms
	\[\begin{array}{rcl}
		\varphi:\GL_n(q)	&\into&\Sp_{2n}(q)\\
		g	&\mapsto&\begin{pmatrix}
			g & \\
			& j_n(g^{\tr})^{-1}j_n
		\end{pmatrix}
	\end{array}
	\quad\hbox{ and }\quad
	\begin{array}{rcl}
		\psi:\GL_n(q)	&\into&\SO_{2n+1}(q)\\
		g	&\mapsto&\begin{pmatrix}
			g & & \\
			& 1 &\\
			& & j_n(g^{\tr})^{-1}j_n
		\end{pmatrix},
	\end{array}\]
naturally allowing use to view $\GL_n(q)$ as a subgroup.

\subsubsection{The Results for Types $\type{B}$ and $\type{C}$}
Using the above and the results for $\GL_n(q)$ in Section \ref{subsec:linandunitary}, we are able to prove:

\begin{lemma}\label{lem:typec}
	The symplectic group $\Sp_{2n}(q)$, $n\geq 2$, possesses a nontrivial semisimple element $s$ that satisfies condition (\ref{condA}) of Definition \ref{defn:conditions}, and any such element automatically satisfies conditions (\ref{condB}) and (\ref{condC}).
\end{lemma}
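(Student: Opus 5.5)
We need to exhibit a nontrivial real odd-order semisimple $s\in\Sp_{2n}(q)$ with $C_{\mathbf{G}}(s)^F\subgp C^\circ_{\mathbf{G}}(s)$, and then check that such an element automatically satisfies (\ref{condB}) and (\ref{condC}).

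\emph{Condition (\ref{condA}).} For the connectedness requirement we use Lemma \ref{lem:centconn}. The group $\mathbf{G}=\Sp_{2n}(k)$ is simply connected, so $C_{\mathbf{G}}(s)$ is connected for every semisimple $s$; in particular $C_{\mathbf{G}}(s)^F\subgp C^\circ_{\mathbf{G}}(s)$.

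For reality, every semisimple element of $\Sp_{2n}(k)$ is $\mathbf{G}$-conjugate to its inverse. Indeed, its eigenvalues come in pairs $\mu,\mu^{-1}$, so $s$ is conjugate into $\mathbf{T}_0$. The longest element $w_0=-1$ of $\mathbf{W}\cong C_2\wr S_n$ then inverts $\mathbf{T}_0$. Lemma \ref{lem:realelts} therefore makes every semisimple element of $\Sp_{2n}(q)$ real.

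It remains to find a nontrivial semisimple element of odd order. Since $n\geq2$, the order $|\Sp_{2n}(q)|=q^{n^2}\prod_{i=1}^n(q^{2i}-1)$ is divisible by $q^4-1$, hence by $q^2+1$. The number $q^2+1\equiv 2\pmod 4$ has an odd prime divisor, since $q^2+1>2$. Cauchy's theorem then gives an element $s$ of odd prime order $r$. Because $r\neq p$ (as $r\mid q^2+1$), $s$ is semisimple. Concretely, $s$ can be taken inside $\Sp_4(q)\subgp\Sp_{2n}(q)$. Alternatively, when $q>3$ one can take $\varphi(\diag(\lambda,\lambda^{-1},1,\dots))$ for an odd-order $\lambda\in\F_q^\times$. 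Either way, $s$ satisfies (\ref{condA}).

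\emph{Conditions (\ref{condB}) and (\ref{condC}).} Let $s$ be any element satisfying (\ref{condA}).
\begin{itemize}
\item For (\ref{condC}): $\Sp_{2n}(q)$ is perfect (\ref{factsaboutsp}), so $s\in(\Sp_{2n}(q))_{\text{der}}$ trivially.
\item For (\ref{condB}): $Z(\Sp_{2n}(q))=\{\pm\id\}$ is a $2$-group. If $s$ were conjugate to $sz$ with $z=-\id$, then, since $s$ and $z$ commute and have coprime orders, $|sz|=|s|\cdot 2\neq|s|$. This contradicts conjugacy, so $z=1$.
\end{itemize}

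The only step with real content is the existence of an odd-order nontrivial semisimple element, which the divisibility argument above settles. Everything else is formal.
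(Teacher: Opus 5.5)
Your proof is correct, but it handles the existence part differently from the paper.

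The paper does not show that all semisimple elements of $\Sp_{2n}(q)$ are real. Instead it takes a real odd-order element $t\in\GL_n(q)$ from Lemma \ref{lem:typea} and pushes it into $\Sp_{2n}(q)$ via the embedding $\varphi$; reality is preserved because $\varphi$ is a homomorphism. This forces a separate treatment of $(n,q)\in\{(2,3),(3,3)\}$. There the paper uses an explicit element of order $5$ in a torus $\mathbf{T}_0[w]$ of $\Sp_4(3)$ with $w$ of order $4$, and this element is inverted by $i_4$.

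You instead note that $w_0=-1$ in type $\type{C}_n$. With connected centralizers and Lemma \ref{lem:realelts}, this makes every semisimple element of $\Sp_{2n}(q)$ real. You then get a nontrivial odd-order semisimple element for all $q$ at once from an odd prime divisor of $q^2+1$. This is exactly the paper's $\Sp_4(3)$ argument carried out for every $q$.

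What each route buys:
\begin{itemize}
\item Your route is shorter and has no case split. It is independent of the type-$\type{A}$ analysis, and it proves more: every nontrivial odd-order element satisfies (\ref{condA}).
\item The paper's route uses the same mechanism across types $\type{B}$, $\type{C}$ and $\type{D}$, namely Lemma \ref{lem:typea} plus an embedding of a general linear group. That uniformity matters for $\type{D}_n$ with $n$ odd, where $w_0\neq -1$ and your blanket reality argument does not carry over verbatim.
\end{itemize}
You treat conditions (\ref{condB}) and (\ref{condC}) exactly as the paper does.

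Two small blemishes, neither load-bearing:
\begin{itemize}
\item The inference ``the eigenvalues come in pairs $\mu,\mu^{-1}$, so $s$ is conjugate into $\mathbf{T}_0$'' gives the wrong reason. The right reason is that $s$ lies in some maximal torus and all maximal tori are $\mathbf{G}$-conjugate. The pairing of eigenvalues is instead what lets $w_0=-1$ send $s$ to $s^{-1}$.
\item Your ``alternatively, when $q>3$'' remark is false whenever $q-1$ is a power of $2$ (for example $q=5,9,17$). In those cases $\F_q^\times$ has no nontrivial element of odd order.
\end{itemize}
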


\begin{proof}
	For ease, set $\mathbf{G}=\Sp_{2n}(k)$ and continue to let $F$ denote the standard Frobenius endomorphism of $\mathbf{G}$ such that $\mathbf{G}^F=\Sp_{2n}(q)$. First suppose $1\neq s\in\Sp_{2n}(q)$ is a semisimple element satisfying (\ref{condA}). Let $z\in Z(\Sp_{2n}(q))$ be such that $s$ and $sz$ are conjugate. As noted in Section \ref{factsaboutsp}, $Z(\Sp_{2n}(q))$ is cyclic of order 2. Since $s$ is a $2'$-element and $|s|=|sz|=|s|\cdot|z|$, we see $z=1$ and $s$ satisfies (\ref{condB}). Moreover, $s$ satisfies (\ref{condC}) because $\Sp_{2n}(q)_{\text{der}}=\Sp_{2n}(q)$. 
	
	To complete the proof, it remains to show that $\Sp_{2n}(q)$ possesses such an element $s$. Now, because $\mathbf{G}$ is simple of simply connected type, the centralizer $C_{\mathbf{G}}(s)$ of any semisimple element $s$ is connected. So we just need to show that there exists a nontrivial semisimple element $s\in\Sp_{2n}(q)$ that is real and has odd order. 
	
	If $(n,q)\notin\set{(2,3),(3,3)}$ then, by Lemma \ref{lem:typea}, there exists a nontrivial semisimple element $t\in\GL_n(q)$ that is real and has odd order. If we set $s=\varphi(t)\in\Sp_{2n}(q)$, where $\varphi$ is the injective homomorphism defined in  \ref{factsaboutsp}, then $s$ is a nontrivial semisimple element of $\Sp_{2n}(q)$ that satisfies (\ref{condA}). Thus, we are reduced to considering the groups $\Sp_4(3)$ and $\Sp_6(3)$. In fact, we only need to consider $\Sp_4(3)$, since $\Sp_6(3)$ contains a subgroup isomorphic to $\Sp_4(3)$.
	
	Now, the Weyl group of $\Sp_4(k)$ is isomorphic to $D_8$, the dihedral group of order 8. If $w\in\mathbf{W}$ is an element of order 4 then $\mathbf{T}_0[w]$, where $\mathbf{T}_0\subgp\Sp_4(k)$ is the maximally $F$-split torus of \ref{weyltypebandc}, contains the diagonal matrix $d=\diag(\lambda,\lambda^3,\lambda^2,\lambda^4)$, where $\lambda\in\F_{3^4}^\times$ is a primitive $5$th root of unity. Therefore, as discussed in \ref{toritype}, $\Sp_4(3)$ contains a nontrivial semisimple element $s$ that is $\mathbf{G}$-conjugate to $d$ and therefore has odd order. Finally, since $d$ is $\mathbf{G}$-conjugate to $d^{-1}$ (indeed, ${}^{i_4}d=d^{-1}$) we see that $s$ is $\mathbf{G}$-conjugate to $s^{-1}$, and Lemma \ref{lem:realelts} then implies that $s$ is a real element of $\Sp_4(3)$, as desired.
\end{proof}

\begin{lemma}\label{lem:typebodd}
	The odd-dimensional special orthogonal group $\SO_{2n+1}(q)$, $n\geq 2$, possesses a nontrivial semisimple element $s$ that satisfies conditions (\ref{condA}) and (\ref{condC}) of Definition \ref{defn:conditions}, and any such element automatically satisfies condition (\ref{condB}).
\end{lemma}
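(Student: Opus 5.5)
The proof will mirror that of Lemma \ref{lem:typec}, with one extra requirement: the element must lie in $\Omega_{2n+1}(q)$, and we must handle the fact that $\SO_{2n+1}(k)$ is not simply connected.

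\emph{Condition (\ref{condB}) is automatic.} By \ref{factsaboutsoodd}, $Z(\SO_{2n+1}(q))$ is trivial, so there is no $1\neq z$ to rule out.

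\emph{Reduction to odd order and reality.} Let $\mathbf{G}=\SO_{2n+1}(k)$. Its simply connected cover $\Spin_{2n+1}(k)\onto\mathbf{G}$ has kernel of order $2$. So by Lemma \ref{lem:centconn}, every semisimple element $s$ of odd order has connected centralizer $C_{\mathbf{G}}(s)$. In particular $C_{\mathbf{G}}(s)^F\subgp C^\circ_{\mathbf{G}}(s)$ holds. Lemma \ref{lem:realelts} then says such an $s$ is real in $\SO_{2n+1}(q)$ as soon as it is $\mathbf{G}$-conjugate to $s^{-1}$.

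\emph{Condition (\ref{condC}).} The quotient $\SO_{2n+1}(q)/\Omega_{2n+1}(q)$ has order $2$. Hence every odd-order element $s$ satisfies $s=(s^{(m+1)/2})^2$ with $m=|s|$, and so lies in $\Omega_{2n+1}(q)$. It therefore suffices to find a nontrivial semisimple element of $\SO_{2n+1}(q)$ of odd order that is $\mathbf{G}$-conjugate to its inverse.

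\emph{Existence.} If $(n,q)\notin\{(2,3),(3,3)\}$, Lemma \ref{lem:typea} gives a nontrivial real semisimple $t\in\GL_n(q)$ of odd order. Take $s=\psi(t)$, using the embedding $\psi$ of \ref{factsaboutsoodd}. Then $s$ has odd order, and its eigenvalues are those of $t$, those of $t^{-1}$, and $1$. Since $t$ is conjugate to $t^{-1}$ in $\GL_n(q)$, applying $\psi$ shows $s$ is conjugate to $s^{-1}$ already in $\SO_{2n+1}(q)$.

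This leaves $\SO_5(3)$ and $\SO_7(3)$. For $\SO_7(3)$ we embed $\SO_5(3)$ block-diagonally, padding with an identity block on a nondegenerate $2$-dimensional complement. So only $\SO_5(3)$ needs an argument, and this is the one genuinely new step. The plan is to copy the $\Sp_4(3)$ argument. Let $w\in\mathbf{W}\iso C_2\wr S_2\iso D_8$ have order $4$, that is, the Coxeter torus. Then $|\mathbf{T}_0[w]|=q^2+1=10$, so $\mathbf{T}_0[w]$ contains the element $d=\diag(\lambda,\lambda^{3},1,\lambda^{-3},\lambda^{-1})$, where $\lambda\in\F_{81}^\times$ has order $5$. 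Checking the Frobenius twist, $F(d)=d^3$ should be the $w$-conjugate of $d$. The diagonal torus of $\SO_5$ is $\diag(a,b,1,b^{-1},a^{-1})$, and $w$ acts by $a\mapsto b\mapsto a^{-1}$. Applying this to $a=\lambda$, $b=\lambda^3$ gives $(\lambda^3,\lambda^{-1})$, which matches $F(d)$ because $\lambda^{9}=\lambda^{-1}$.

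By \ref{toritype}, $\SO_5(3)$ then contains an element $s$ that is $\mathbf{G}$-conjugate to $d$, of order $5$. Finally, $d$ is conjugate to $d^{-1}$ by the longest Weyl element $-1\in\mathbf{W}$, which inverts the whole torus. So $s$ is real by Lemma \ref{lem:realelts}, completing the proof.

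The main point to double-check is the $\SO_5(3)$ torus computation, namely that $d$ really lies in $\mathbf{T}_0[w]$. As a fallback, $\SO_5(3)\iso\PSp_4(3).2$ has order divisible by $5$, and an element of order $5$ in a Coxeter torus of type $B_2$ is always $W$-conjugate to its inverse since $-1\in\mathbf{W}$. This still yields a real element of order $5$.
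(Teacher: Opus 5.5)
Your proposal is correct and follows the paper's proof essentially step for step. Condition (\ref{condB}) comes from the trivial center. Connected centralizers come from the $\Spin_{2n+1}$ cover and Lemma~\ref{lem:centconn}. Condition (\ref{condC}) comes from $\Omega_{2n+1}(q)$ having index $2$. The generic case uses the embedding $\psi$ with Lemma~\ref{lem:typea}, and the exceptional cases reduce to $\SO_5(3)$, handled by an order-$4$ element $w\in\mathbf{W}\cong D_8$.

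Your $d=\diag(\lambda,\lambda^3,1,\lambda^{-3},\lambda^{-1})$ is exactly the paper's $\diag(\lambda,\lambda^3,1,\lambda^2,\lambda^4)$. The element $-1\in\mathbf{W}$ you use to invert it is represented by the paper's $j_5$. Your explicit check that $d\in\mathbf{T}_0[w]$ is a verification the paper leaves implicit.
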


\begin{proof}
	Let $\mathbf{G}=\SO_{2n+1}(k)$ and continue to let $F$ denote the standard Frobenius endomorphism of $\mathbf{G}$ such that $\mathbf{G}^F=\SO_{2n+1}(q)$. We first note that any nontrivial semisimple element $s\in\SO_{2n+1}(q)$ satisfies (\ref{condB}) since, as noted in \ref{factsaboutsoodd}, the center of $\SO_{2n+1}(q)$ is trivial. 
	
	Recall that the spin group $\Spin_{2n+1}(k)$ is a semisimple algebraic group of simply connected type with the same root system as $\mathbf{G}$, and the kernel of the natural isogeny $\pi:\Spin_{2n+1}(k)\onto\mathbf{G}$ is cyclic of order 2. So if $s\in\mathbf{G}$ is a semisimple element with odd order, then $C_{\mathbf{G}}(s)$ is connected by Lemma \ref{lem:centconn}. Therefore, to complete the proof it suffices to demonstrate that $\SO_{2n+1}(q)$ possesses a nontrivial semisimple element $s$ that is real, has odd order, and belongs to $\SO_{2n+1}(q)_{\text{der}}=\Omega_{2n+1}(q)$.
	
	If $(n,q)\notin\set{(2,3),(3,3)}$, we argue as in the third paragraph of Lemma \ref{lem:typec}, but with $\psi$ in place of $\varphi$, to see that there
    is a nontrivial semisimple element with all of the desired properties. Thus, we are reduced to considering the groups $\SO_5(3)$ and $\SO_7(3)$, for which it suffices to consider $\SO_5(3)$, since again $\SO_7(3)$ contains a subgroup isomorphic to $\SO_5(3)$. 
	
	We proceed as in the proof of Lemma \ref{lem:typec}. The Weyl group $\mathbf{W}$ of $\mathbf{G}=\SO_5(k)$ is isomorphic to $D_8$, and if $w\in\mathbf{W}$ is an element of order 4 then $\mathbf{T}_0[w]$ contains the diagonal matrix $d=\diag(\lambda,\lambda^3,1,\lambda^2,\lambda^4)$, where $\lambda\in\F_{3^4}^\times$ is a primitive $5$th root of unity. The group $\SO_5(3)$ contains a nontrivial semisimple element $s$ that is $\mathbf{G}$-conjugate to $d$. Then $s$ is a $2'$-element, and since $d$ is $\mathbf{G}$-conjugate to $d^{-1}$ (indeed, ${}^{j_5}d=d^{-1}$) Lemma \ref{lem:realelts} implies that $s$ is a real element of $\SO_5(3)$. Finally, $s$ must belong to $\Omega_5(3)$ since, as noted in \ref{factsaboutsoodd}, $\Omega_5(3)$ has index $2$ in $\SO_5(3)$. This completes the proof.
\end{proof}

\begin{corollary}\label{cor:typebc}
	Assume that $n\geq 2$ and $q$ is an integral power of an odd prime $p$. Let $G\in\{\Sp_{2n}(q), \PSp_{2n}(q), \SO_{2n+1}(q), \Omega_{2n+1}(q)\}$. Then $\OO G$ possesses a non-principal real $2$-block.
\end{corollary}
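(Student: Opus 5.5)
The plan is to apply Proposition~\ref{prop:main} with the duality between $\Sp_{2n}(k)$ and $\SO_{2n+1}(k)$, taking the semisimple elements produced by Lemmas~\ref{lem:typec} and~\ref{lem:typebodd} in the dual groups.

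\emph{Case $G=\Sp_{2n}(q)$ or $\PSp_{2n}(q)$.} Take $\mathbf{G}=\Sp_{2n}(k)$, so $\mathbf{G}^\ast=\SO_{2n+1}(k)$ and $G^\ast=\SO_{2n+1}(q)$. By Lemma~\ref{lem:typebodd}, $G^\ast$ contains a nontrivial semisimple $s$ satisfying (\ref{condA}), (\ref{condB}) and (\ref{condC}). Since $\Sp_{2n}(q)$ is perfect (\ref{factsaboutsp}), we have $G_{\text{der}}=G$ and $G/G_{\text{der}}$ is trivial, hence a $p'$-group. Moreover $|Z(G)|=2=[\SO_{2n+1}(q):\Omega_{2n+1}(q)]=|G^\ast/(G^\ast)_{\text{der}}|$. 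Proposition~\ref{prop:main}(a) therefore gives the result for $\Sp_{2n}(q)$. Proposition~\ref{prop:main}(c) gives it for $G_{\text{der}}/(G_{\text{der}}\cap Z(G))=\PSp_{2n}(q)$.

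\emph{Case $G=\SO_{2n+1}(q)$ or $\Omega_{2n+1}(q)$.} Take $\mathbf{G}=\SO_{2n+1}(k)$, so $G^\ast=\Sp_{2n}(q)$. Lemma~\ref{lem:typec} supplies a nontrivial semisimple $s\in G^\ast$ satisfying (\ref{condA}) and (\ref{condB}). Here $G/G_{\text{der}}=\SO_{2n+1}(q)/\Omega_{2n+1}(q)$ has order $2$, which is prime to the odd prime $p$. Proposition~\ref{prop:main}(a) then handles $\SO_{2n+1}(q)$, and Proposition~\ref{prop:main}(b) handles $G_{\text{der}}=\Omega_{2n+1}(q)$.

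The only step needing care is checking the hypotheses on $G/G_{\text{der}}$ and $|Z(G)|$. These are exactly the facts recorded in \ref{factsaboutsp}, so I do not expect any real obstacle.
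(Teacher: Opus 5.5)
Your proposal is correct and follows the paper's proof, which just cites Proposition \ref{prop:main}, Lemmas \ref{lem:typec} and \ref{lem:typebodd}, and the facts in \ref{factsaboutsp}. You have made explicit which part of the proposition gives each of the four groups, and you have verified the hypotheses on $G/G_{\text{der}}$ and $|Z(G)|$ correctly.
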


\begin{proof}
	This follows from Proposition \ref{prop:main}, Lemmas \ref{lem:typec} and \ref{lem:typebodd}, and the facts discussed in \ref{factsaboutsp}.
\end{proof}

\subsection{Even-Dimensional Orthogonal Groups}\label{sec:evendimOrth}

In this subsection we assume $n\geq 4$. Continue to let $q$ denote a positive integral power of an odd prime $p$ and let $k=\bar{\F}_p$. Recall that the special orthogonal group of dimension $2n$ is the group
\begin{equation*}
	\SO_{2n}(k)=\set{g\in\SL_{2n}(k)|g^{\tr}j_{2n}g=j_{2n}}
\end{equation*}
where $j_{2n}$ is as in Subsection \ref{subsec:linandunitary}. 

Let $\mathbf{G}=\SO_{2n}(k)$ and let $F=F_q$ denote the standard Frobenius endomorphism of $\mathbf{G}$. The special orthogonal group of plus type is the group of fixed points $\SO_{2n}^+(q):=\SO_{2n}(k)^F$. Let $F'$ denote the Frobenius endomorphism of $\mathbf{G}$ defined by $F'(g)={}^x(F_q(g))$ for all $g\in\mathbf{G}$, where $x$ is the block diagonal matrix
\begin{equation*}
	x=\begin{pmatrix}
		\id_{n-1} & &\\
		& j_2 &\\
		& & \id_{n-1}
	\end{pmatrix}.
\end{equation*}
Then the special orthogonal group of minus type is the group of fixed points $\SO_{2n}^-(q):=\SO_{2n}(k)^{F'}$. The pairs $(\SO_{2n}(k),F)$ and $(\SO_{2n}(k),F')$ are each self-dual.

\subsubsection{Useful Subgroups of $\SO_{2n}^\pm(q)$}\label{factsaboutsoeven}\label{injmapsforevenso}
	Let $\epsilon$ denote one of the symbols $+$ or $-$. Then $\Omega_{2n}^\epsilon(q):=\SO_{2n}^\epsilon(q)_{\text{der}}$ and $\POmega_{2n}^\epsilon(q):=\Omega_{2n}^\epsilon(q)/Z(\Omega_{2n}^\epsilon(q))$. We have $|\SO_{2n}^\epsilon(q):\Omega_{2n}^\epsilon(q)|=2$ and 
$|Z(\Omega_{2n}^\epsilon(q))|\leq 2$. Moreover, $Z(\Omega_{2n}^\epsilon(q))=\Omega_{2n}^\epsilon(q)\cap Z(\SO_{2n}^\epsilon(q))$.
	We again have injective group homomorphisms
	\[\begin{array}{rcl}
		\varphi:\GL_n(q)	&\into&\SO_{2n}^+(q)\\
		g	&\mapsto&
		\begin{pmatrix}
			g & \\
			& j_n(g^{\tr})^{-1} j_n
		\end{pmatrix}
	\end{array}
	\quad\hbox{ and }\quad
	\begin{array}{rcl}
		\psi:\GL_{n-1}(q)	&\into&\SO_{2n}^-(q)\\
		g	&\mapsto&\begin{pmatrix}
			g & & \\
			& \id_2 &\\
			& & j_{n-1}(g^{\tr})^{-1}j_{n-1}
		\end{pmatrix}.
	\end{array}\]

\subsubsection{The Results for Types $\type{D}$ and $\tw{2}\type{D}$}

We may now proceed as in the cases of types $\type{B}$ and $\type{C}$ to complete the proof for the simple classical groups.
\begin{lemma}\label{lem:typed}
	The even-dimensional special orthogonal groups $\SO_{2n}^\epsilon(q)$, $n\geq 4$, each possess a nontrivial semisimple element $s$ that satisfies conditions (\ref{condA}) and (\ref{condC}) of Definition \ref{defn:conditions}, and any such element automatically satisfies condition (\ref{condB}).
\end{lemma}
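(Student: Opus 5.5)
I would follow the template of Lemmas \ref{lem:typec} and \ref{lem:typebodd}. Let $\mathbf{G}=\SO_{2n}(k)$ with Frobenius $F$ or $F'$. The first step is condition (\ref{condB}). By \ref{factsaboutsoeven}, $Z(\SO_{2n}^\epsilon(q))\subseteq\set{\pm\id_{2n}}$ is a $2$-group. So if $s$ has odd order and $s$ is conjugate to $sz$ with $z$ central, then $|s|=|sz|=|s|\cdot|z|$, which forces $z=1$. Hence any element satisfying (\ref{condA}) automatically satisfies (\ref{condB}).

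The second step is connectedness of centralizers. The simply connected cover $\Spin_{2n}(k)\onto\mathbf{G}$ has kernel of order $2$. By Lemma \ref{lem:centconn}, every semisimple element of odd order in $\mathbf{G}$ therefore has connected centralizer, so the containment $C_{\mathbf{G}}(s)^F\subgp C^\circ_{\mathbf{G}}(s)$ in (\ref{condA}) is automatic for such $s$. Condition (\ref{condC}) is also automatic for odd-order elements, since $\Omega_{2n}^\epsilon(q)$ has index $2$ in $\SO_{2n}^\epsilon(q)$. It therefore remains only to produce a nontrivial real semisimple element of odd order in $\SO_{2n}^\epsilon(q)$.

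The third step constructs this element by embedding a general linear group.
\begin{itemize}
\item For $\epsilon=+$, use $\varphi:\GL_n(q)\into\SO_{2n}^+(q)$. Since $n\geq4$, Lemma \ref{lem:typea} gives a nontrivial real odd-order semisimple $t\in\GL_n(q)$, and we set $s=\varphi(t)$.
\item For $\epsilon=-$, use $\psi:\GL_{n-1}(q)\into\SO_{2n}^-(q)$. Since $n-1\geq3$, Lemma \ref{lem:typea} applies unless $(n-1,q)=(3,3)$, and we set $s=\psi(t)$.
\end{itemize}
In both cases $s$ is semisimple of odd order. It is real because $t$ is conjugate to $t^{-1}$ inside $\GL$, and applying the embedding carries this conjugacy into $\SO^\epsilon_{2n}(q)$. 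Alternatively, $s$ is $\mathbf{G}$-conjugate to $s^{-1}$, and Lemma \ref{lem:realelts} applies since the centralizer is connected.

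The remaining case $\SO_8^-(3)$ is the only real obstacle. I would first try a subgroup route: the orthogonal decomposition $\SO_8^-(3)\supseteq\SO_5(3)\times\SO_3(3)$, or a block-diagonal $\SO_4^+(3)\times\SO_4^-(3)$. Inside $\SO_5(3)$, Lemma \ref{lem:typebodd} provides a real element of order $5$, and realness transfers to the ambient group (again via Lemma \ref{lem:realelts}). The subtle point is that the form on the $\SO_5$ summand must be compatible with the minus-type discriminant. If that is awkward, I would instead choose $w\in\mathbf{W}$ in the appropriate $\sigma$-class whose torus $\mathbf{T}_0[w]$ contains $\diag(\lambda,\lambda^3,1,1,1,1,\lambda^2,\lambda^4)$ with $\lambda$ a primitive $5$th root of unity. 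This matrix is $\mathbf{G}$-conjugate to its inverse, for instance via $j_{8}$ adjusted to determinant $1$, and Lemma \ref{lem:realelts} then gives realness as before. Checking that the right $w$ is a $\sigma$-class for $F'$ is the main verification needed.
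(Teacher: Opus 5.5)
Your proposal is correct and follows essentially the same route as the paper. The steps match: (\ref{condB}) because $Z(\SO_{2n}^\epsilon(q))$ is a $2$-group, connected centralizers via Lemma \ref{lem:centconn}, elements from Lemma \ref{lem:typea} transported through $\varphi$ and $\psi$, and the lone exception $\SO_8^-(3)$ handled by the torus construction from the end of Lemma \ref{lem:typebodd}, which is your backup option (your required $w\in\mathbf{W}$ does exist).

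Your primary $\SO_5(3)$-subgroup route for $\SO_8^-(3)$ also works without any discriminant issue. An $8$-dimensional minus-type space contains a nondegenerate $5$-dimensional subspace, and the odd-dimensional special orthogonal group does not depend on the discriminant.
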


\begin{proof}
	The proof is similar to the proof of Lemma \ref{lem:typebodd}. First note that any semisimple element $s\in\SO_{2n}^{\epsilon}(q)$ of odd order satisfies condition (\ref{condB}) since $Z(\SO_{2n}^\epsilon(q))$ is a $2$-group. Note also that such an element $s$ has a connected centralizer in $\SO_{2n}(k)$ by Lemma \ref{lem:centconn}. Now, using Lemma \ref{lem:typea} and the injective homomorphisms of \ref{injmapsforevenso}, one can produce a nontrivial semisimple element $s\in\SO_{2n}^{\epsilon}(q)$, except possibly when $\epsilon=-$, $n=4$, and $q=3$. In the latter case, we can produce a nontrivial semisimple element $s\in\SO_8^-(3)$ satisfying the desired conditions by following the same steps used in the final part of the proof of Lemma \ref{lem:typebodd}.
\end{proof}

\begin{corollary}\label{cor:typed}
	Assume that $\epsilon\in\set{+,-}$, $n\geq 4$, and $q$ is an integral power of an odd prime $p$. Let $G\in\{ \SO_{2n}^\epsilon(q), \Omega_{2n}^\epsilon(q), \POmega_{2n}^\epsilon(q) \}$. Then $\OO G$ possesses a non-principal real $2$-block.
\end{corollary}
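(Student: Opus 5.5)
The plan is to combine Proposition \ref{prop:main} with Lemma \ref{lem:typed}, following the proof of Corollary \ref{cor:typebc}. Take $\mathbf{G}=\SO_{2n}(k)$ with Frobenius $F$ or $F'$, so that $G:=\mathbf{G}^F=\SO_{2n}^\epsilon(q)$. Since the pair is self-dual, we may take $G^\ast=\SO_{2n}^\epsilon(q)$ as well. Lemma \ref{lem:typed} gives a nontrivial semisimple $s\in G^\ast$ satisfying conditions (\ref{condA}), (\ref{condB}) and (\ref{condC}).

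Part (a) of Proposition \ref{prop:main} then gives a non-principal real $2$-block of $\OO[\SO_{2n}^\epsilon(q)]$.

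For $\Omega_{2n}^\epsilon(q)=G_{\text{der}}$ we apply part (b). The hypothesis to check is that $G/G_{\text{der}}$ is a $p'$-group. By \ref{factsaboutsoeven} this quotient has order $2$, and $p$ is odd, so the hypothesis holds. Hence $\OO[\Omega_{2n}^\epsilon(q)]$ has a non-principal real $2$-block.

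For $\POmega_{2n}^\epsilon(q)$ we apply part (c). By \ref{factsaboutsoeven}, $Z(\Omega_{2n}^\epsilon(q))=\Omega_{2n}^\epsilon(q)\cap Z(\SO_{2n}^\epsilon(q))$. So the quotient $G_{\text{der}}/(G_{\text{der}}\cap Z(G))$ appearing in (c) is exactly $\POmega_{2n}^\epsilon(q)$. The remaining hypothesis is $|Z(G)|=|G^\ast/(G^\ast)_{\text{der}}|$, where the right-hand side equals $2$. We verify $|Z(G)|=2$ as follows:
\begin{itemize}
\item $Z(\mathbf{G})=\{\pm \id_{2n}\}$.
\item Both elements lie in $\mathbf{G}^F$, and $\det(-\id_{2n})=1$.
\item Since $F$ is a Frobenius endomorphism, $Z(G)=Z(\mathbf{G})^F$ (as in the proof of Proposition \ref{prop:main}).
\end{itemize}
Therefore $Z(G)=\{\pm\id_{2n}\}$ has order $2$. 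Part (c) then yields a non-principal real $2$-block of $\OO[\POmega_{2n}^\epsilon(q)]$.

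The only real obstacle is this bookkeeping on centers and indices. In particular, we need $Z(G)=Z(\mathbf{G})^F$ and the index-$2$ fact for both twisted and untwisted forms. The one subtle point is the identification of the quotient in (c) with $\POmega_{2n}^\epsilon(q)$, which is covered by the recorded equality $Z(\Omega)=\Omega\cap Z(\SO)$, even in the case where $-\id\notin\Omega_{2n}^\epsilon(q)$.
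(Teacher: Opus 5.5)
Your proposal is correct and matches the paper's argument. The paper's proof just cites Proposition \ref{prop:main}, Lemma \ref{lem:typed}, and the facts in \ref{factsaboutsoeven}; you have written out the hypothesis checks for parts (a), (b), (c) that this citation leaves implicit, namely the index $2$, $|Z(\SO_{2n}^\epsilon(q))|=2$, and $Z(\Omega_{2n}^\epsilon(q))=\Omega_{2n}^\epsilon(q)\cap Z(\SO_{2n}^\epsilon(q))$.
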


\begin{proof}
	This follows from Proposition \ref{prop:main}, Lemma \ref{lem:typed}, and the facts discussed in \ref{factsaboutsoeven}.
\end{proof}

\subsection{Exceptional Groups}

We continue to assume that $q$ is a positive integral power of an odd prime $p$ and now turn our attention at the exceptional groups of Lie type. We first consider the groups $\type{G}_2(q)$ and $\tw{3}\type{D}_4(q)$.

\begin{lemma}\label{lem:g23d4}
    Let $\bG$  be a simple algebraic group and $F$ a Frobenius endomorphism such that $\bG^F\in\{\type{G}_2(q), \tw{3}\type{D}_4(q)\}$. Then $\OO[\bG^F]$ possesses a non-principal real $2$-block.
\end{lemma}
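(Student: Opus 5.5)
Both groups have trivial center and are self-dual in the relevant sense. $\type{G}_2$ is both simply connected and adjoint, and $\tw{3}\type{D}_4(q)$ arises from the simply connected group $\Spin_8$, whose dual is adjoint of type $\type{D}_4$ with a triality Frobenius. By Proposition~\ref{prop:main}(a), it therefore suffices to produce a nontrivial semisimple element $s\in G^\ast=\bG^{\ast F^\ast}$ satisfying condition (\ref{condA}). We need $s$ real, of odd order, with $C_{\bG^\ast}(s)^{F^\ast}\subgp C^\circ_{\bG^\ast}(s)$.

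\textbf{Connectedness.} For $\type{G}_2$ the fundamental group is trivial, so Lemma~\ref{lem:centconn} makes every centralizer of a semisimple element of $\bG^\ast=\type{G}_2(k)$ connected. For $\tw{3}\type{D}_4$ the dual $\bG^\ast$ is adjoint of type $\type{D}_4$, and the isogeny from $\Spin_8(k)$ has kernel of order $4$, a $2$-group. Hence any odd-order semisimple $s$ has connected centralizer by Lemma~\ref{lem:centconn}. In both cases condition (\ref{condA}) reduces to finding a nontrivial real semisimple element of odd order in $G^\ast$.

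\textbf{Reality.} In both $\type{G}_2$ and $\type{D}_4$ the longest element $w_0$ of the Weyl group acts as $-1$ on the maximal torus. So every semisimple element $t\in\bG^\ast$ is $\bG^\ast$-conjugate to $t^{-1}$: conjugate $t$ into $\mathbf{T}_0^\ast$ and apply a representative of $w_0$. Since the centralizer is connected, Lemma~\ref{lem:realelts} shows that $t$ is real in $G^\ast$. It therefore remains to show that $G^\ast$ contains a nontrivial semisimple element of odd order, i.e.\ that $|G^\ast|$ has an odd prime divisor other than $p$.

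\textbf{Existence of the element.} This follows from the order formulas $|\type{G}_2(q)|=q^6(q^6-1)(q^2-1)$ and $|\tw{3}\type{D}_4(q)|=q^{12}(q^8+q^4+1)(q^6-1)(q^2-1)$. In each case $q^2+q+1$ divides $q^6-1$, and $q^2+q+1$ is odd and greater than $1$. It is also coprime to $p$, so some odd prime $r\neq p$ divides $|G^\ast|$. A Sylow $r$-subgroup yields a nontrivial semisimple element of odd order, since elements of order prime to $p$ are semisimple.

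The only step needing care is the claim that $w_0=-1$ forces reality. I would justify it via the connectedness argument above, which already covers the $\tw{3}\type{D}_4$ case, since Lemma~\ref{lem:realelts} only requires $\bG^\ast$-conjugacy. Alternatively, the existence of an odd-order real element can be read off from L\"ubeck's tables or from the known fact that all semisimple classes of these groups are real.
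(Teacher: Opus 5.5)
Your argument is correct, but it takes a different route from the paper. The paper does not invoke Proposition~\ref{prop:main} for these two types.

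\textbf{The paper's route.} For $\type{G}_2(q)$ it reads off from the Chang--Ree/Enomoto character tables that there are exactly two non-real irreducible characters. It then uses Hiss's results to see that there are more than three $2$-blocks. A non-real block contains no real character, and non-real blocks come in conjugate pairs, so at most two blocks are non-real. Hence some non-principal block is real. For $\tw{3}\type{D}_4(q)$ it uses Deriziotis--Michler: every irreducible character is real, and there are $\frac14(q^4-q^2)$ blocks of defect zero.

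\textbf{Your route.} You run the uniform machinery the paper uses for the classical groups and for $\type{F}_4$, $\type{E}_6$, $\type{E}_7$, $\type{E}_8$. Each step checks out:
\begin{itemize}
\item $w_0=-1$ makes every semisimple element of $\bG^\ast$ conjugate to its inverse.
\item Lemma~\ref{lem:centconn} gives connected centralizers for odd-order elements, since the isogeny kernel is trivial or a $2$-group. If one takes $\bG$ adjoint for $\tw{3}\type{D}_4$, connectedness is immediate, and the finite group is the same since $Z(\Spin_8)^F=1$.
\item Lemma~\ref{lem:realelts} descends reality from $\bG^\ast$ to $G^\ast$.
\item The odd factor $q^2+q+1$ of the group order supplies an odd prime $r\neq p$.
\end{itemize}

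\textbf{Comparison.} Your approach is more self-contained and consistent with the rest of the paper, and it needs no explicit character tables or block classification. It does rely on Lemma~\ref{lem:Jdecompprops} (only for semisimple characters, where the equivariance is quite robust) and on the Brou\'e--Michel description of $\mathcal{E}_2(G,s)$. The paper's route avoids Lusztig theory for these two types but depends on explicit computations from the literature. For $\tw{3}\type{D}_4(q)$ it also yields the stronger conclusion of many real blocks of defect zero.
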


\begin{proof}
First suppose that $\bG^F=\type{G}_2(q)$. The character table of $\bG^F$ has been determined in \cite{changree, enomoto}. From this, we see  that $\type{G}_2(q)$ has precisely 2 non-real irreducible characters.
    Further, the results of \cite{Hiss_1992} show that $\OO[\mathbf{G}^F]$ has more than three $2$-blocks. We conclude that $\OO[\mathbf{G}^F]$ must possess a non-principal real $2$-block.

    Next, consider the case $\bG^F=\tw{3}\type{D}_4(q)$. In this case, the character table is determined in \cite[Sec.~4]{Deriziotis_1987}. From this, we see that every irreducible character of $\mathbf{G}^F$ is real. Further, the proof of \cite[Cor.~5.1]{Deriziotis_1987} shows that $\OO[\mathbf{G}^F]$ has $\frac{1}{4}(q^4-q^2)$ defect-zero $2$-blocks, each of which must be non-principal.
\end{proof}

In what follows, we will write $\type{E}_6(\epsilon q)_{\text{sc}}$ for the group $\type{E}_6(q)_{\text{sc}}$ when $\epsilon=+$ and $\tw{2}\type{E}_6(q)_{\text{sc}}$ when $\epsilon=-1$. These are the groups of Lie type of the form $\bG^F$, where $\bG$ is a simple simply connected algebraic group with root datum of Cartan type $\type{E}_6$ and $F\colon \bG\rightarrow\bG$ is a standard, respectively twisted, Frobenius endomorphism. Similarly, the group $\type{E}_6(\epsilon q)_{\text{ad}}$ will denote the group obtained from the simple algebraic group of adjoint type $\type{E}_6$, and  $\type{E}_7(q)_{\text{sc}}$, resp.  $\type{E}_7(q)_{\text{ad}}$, will denote the group of Lie type $\bG^F$, where $\bG$ is a simple simply connected algebraic group, respectively simple algbraic group of adjoint type, with root datum of Cartan type $\type{E}_7$ and $F\colon \bG\rightarrow\bG$ is a Frobenius endomorphism. 

\begin{lemma}\label{lem:f4e6}
Let $\bG$ be a simple algebraic group and $F$ a Frobenius endomorphism such that $\bG^F\in\{\type{F}_4(q), \type{E}_{6}(\epsilon q)_{\text{sc}}\}$.  Then $\OO[\bG^F]$ and $\OO[\bG^F/Z(\bG^F)]$ each possess a non-principal real $2$-block.
\end{lemma}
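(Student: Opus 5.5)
The plan is to apply Proposition \ref{prop:main} to $\bG$ simply connected of type $\type{F}_4$ or $\type{E}_6$. The dual group $\bG^\ast$ is then adjoint: for $\type{F}_4$ it is again $\type{F}_4$, which is both simply connected and adjoint, and for $\type{E}_6(\epsilon q)_{\text{sc}}$ it is $\type{E}_6(\epsilon q)_{\text{ad}}$. Since $\bG$ is simply connected, $G=\bG^F$ is perfect, so $G_{\text{der}}=G$ and $G/G_{\text{der}}$ is trivial, hence a $p'$-group. Moreover $|Z(G)|=|Z(\bG)^F|$, and this equals $|G^\ast/(G^\ast)_{\text{der}}|$: both are $1$ for $\type{F}_4$, and both are $\gcd(3,q-\epsilon)$ for $\type{E}_6$. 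This is the standard equality between $Z(\bG)^F$ and $(\bG^\ast)^{F^\ast}/\mathrm{O}^{p'}$ of the adjoint dual. Proposition \ref{prop:main}(c) then gives both conclusions, for $G$ and for $G/Z(G)$, once we exhibit $1\neq s\in G^\ast$ satisfying (\ref{condA}), (\ref{condB}) and (\ref{condC}). Part (a) alone already handles $\OO[\bG^F]$.

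To construct $s$, we use the fact that the $\type{F}_4$ and $\type{E}_6$ Weyl groups contain $-1$ for $\type{F}_4$, and $-w_0$ acts as the graph automorphism for $\type{E}_6$. Rather than rely on this, we realize $s$ inside a subsystem subgroup where realness is easy to see. We take a semisimple element of order $3$ or $5$, or more generally of odd prime order $r$, lying in an $\type{A}_1$-type subgroup (an $\SL_2(q)$ or $\PGL_2(q)$ in $G^\ast$), namely the image of $\diag(\lambda,\lambda^{-1})$ with $\lambda$ of odd order $m>1$ dividing $q^2-1$. This element is conjugate to its inverse inside the $\type{A}_1$, so it is $\bG^\ast$-conjugate to its inverse. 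To get real $G^\ast$-conjugacy via Lemma \ref{lem:realelts}, we need $C_{\bG^\ast}(s)$ connected. By Lemma \ref{lem:centconn} this holds as soon as $|s|$ is prime to $|\ker\pi|$, which is $1$ for $\type{F}_4$ and $3$ for $\type{E}_6$. So we choose $m$ coprime to $3$ in the $\type{E}_6$ case. Connectedness then also gives the containment in (\ref{condA}).

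If $q^2-1$ has no suitable odd prime divisor (coprime to $3$ in type $\type{E}_6$), for instance when $q=3$ or in similar small cases, we instead use a prime dividing $q^2+1$ or $q^4+1$ via a torus of type $\type{B}_2$ or $\type{A}_3$, as in the $\Sp_4(3)$ argument of Lemma \ref{lem:typec}. Equivalently, we embed $\type{Spin}_9(q)\le\type{F}_4(q)$ and $\type{Spin}_{10}^{\epsilon}(q)\le\type{E}_6(\epsilon q)$, and transfer real odd-order elements from Lemmas \ref{lem:typebodd} and \ref{lem:typed}, taking orders prime to $3$. Note that $q^2+1$ is never divisible by $3$, and it always has an odd prime factor since $q^2+1\equiv 2\pmod 4$ and $q^2+1>2$. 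This gives a uniform choice: let $r$ be an odd prime dividing $q^2+1$. Then $r\ne 3$, and an element of order $r$ exists in a $\type{B}_2$-subsystem subgroup, as in Lemma \ref{lem:typebodd}, with ${}^{w}d=d^{-1}$.

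For the remaining conditions, (\ref{condB}) is automatic because $Z(G^\ast)$ is trivial for the adjoint dual of $\type{E}_6$ (and for $\type{F}_4$). Condition (\ref{condC}) holds because $G^\ast/(G^\ast)_{\text{der}}$ has order dividing $3$ while $|s|=r$ is prime to $3$. The main obstacle is making the realness rigorous in $G^\ast$ itself, namely checking that the chosen torus element is $\bG^\ast$-conjugate to its inverse. I would handle this by noting that the $\type{B}_2$-subsystem Weyl group contains $-1$ on its span, and that $s$ lies in that subtorus.
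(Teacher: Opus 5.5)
Your proposal is correct. For $\type{F}_4$ it follows essentially the paper's route. For $\type{E}_6$ it takes a genuinely different route.

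The paper never applies Proposition~\ref{prop:main} to $G=\type{E}_6(\epsilon q)_{\text{sc}}$ directly. Instead it works with the adjoint group $H=\type{E}_6(\epsilon q)_{\text{ad}}$. The dual of $H$ is the simply connected group, which contains $\type{F}_4(q)$ and has all semisimple centralizers connected. So any real odd-order element of $\type{F}_4(q)$ gives, via Proposition~\ref{prop:main}(a), a non-principal real $2$-block $B$ of $H$. The paper then descends to the simple group $S\nor H$, of index dividing $3$:
\begin{itemize}
\item by Lemma~\ref{lem:blockscoveringprincderived}, $B$ cannot cover $B_0(S)$, since it would then be a twist of $B_0(H)$ by a linear character of order $3$, hence not real;
\item the one or three blocks of $S$ covered by $B$ are permuted by complex conjugation, so one of them is real.
\end{itemize}
Finally it lifts back to $G$ by domination (Lemma~\ref{lem:domination}).

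You instead treat $\type{E}_6$ exactly as the paper treats $\type{E}_7$ in Lemma~\ref{lem:e7e8}. You take the adjoint dual, so $(\mathcal{B})$ is automatic. You choose $s$ of prime order $r\neq 3$, which gives both connectedness of $C_{\bG^\ast}(s)$ by Lemma~\ref{lem:centconn} and $s\in(G^\ast)_{\text{der}}$. Proposition~\ref{prop:main}(a),(c) then give both conclusions at once.

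What each route buys:
\begin{itemize}
\item Your route is more uniform and avoids the covering and domination step.
\item The paper's route needs no control over the order of $s$, or over where $s$ sits relative to $(G^\ast)_{\text{der}}$, because the $\type{F}_4$ case supplies everything.
\item Your uniform choice of an odd prime $r\mid q^2+1$ (never $3$, and always existing) also removes the $q=3$ case distinction that the paper inherits from Lemma~\ref{lem:typec}.
\end{itemize}

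One point needs tightening. $\type{E}_6$ is simply laced, so $\bG^\ast$ has no $\type{B}_2$ root subsystem. Your final justification, via the Weyl group of a $\type{B}_2$-subsystem acting as $-1$ on its subtorus, therefore only makes sense for $\type{F}_4$. The repair is easy:
\begin{itemize}
\item $\type{F}_4(k)$ has trivial centre, so it embeds in the adjoint group $\bG^\ast$, giving $\Sp_4(q)\leq\type{F}_4(q)\leq(G^\ast)_{\text{der}}$.
\item Every semisimple element of the algebraic group $\Sp_4(k)$ is conjugate inside $\Sp_4(k)$ to its inverse, because $-1\in W(\type{C}_2)$.
\item Hence an element of order $r$ in $\Sp_4(q)$ is $\bG^\ast$-conjugate to its inverse, and Lemma~\ref{lem:realelts} applies.
\end{itemize}

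Two smaller remarks, neither of which your argument needs:
\begin{itemize}
\item Your aside about transferring elements from Lemma~\ref{lem:typebodd} into $\Spin_9(q)$ is not immediate as stated. That lemma concerns $\SO_9(q)$, and the conjugating element need not lie in $\Omega_9(q)$.
\item Simple connectedness alone does not force $\bG^F$ to be perfect (e.g.\ $\SL_2(3)$), although it does hold for $\type{F}_4(q)$ and $\type{E}_6(\epsilon q)_{\text{sc}}$.
\end{itemize}
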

\begin{proof}
First assume that $\bG^F=\type{F}_4(q)$. Note that in this case, $(\bG, F)$ is self-dual,  $Z(\bG)$ is trivial, and $\bG^F=\bG^F/Z(\bG^F)$. Further, $\bG^F$ contains subgroups isomorphic to $\Sp_6(q)$. Then by Lemma \ref{lem:typec}, there is a semisimple $2'$-element $s\in (\bG^\ast)^{F^\ast}$ satisfying condition (\ref{condA}) of Definition \ref{defn:conditions}. (Note that $C_{\bG^\ast}(s)$ is necessarily connected since $Z(\bG)$ is.) Then the statement follows from Proposition \ref{prop:main}(a).

Now suppose $G=\bG^F=\type{E}_{6}(\epsilon q)_{\text{sc}}$, and write $S:=\bG^F/Z(\bG^F)$. Further,  let $\mathbf{H}=\type{E}_{6,ad}$ be the corresponding simple algebraic group of adjoint type, with $H=\mathbf{H}^F=\type{E}_{6}(\epsilon q)_{\text{ad}}$. Then $H^\ast=G$ contains a subgroup isomorphic to $\type{F}_4(q)$, so from above there exists a real semisimple $2'$-element of $H^\ast$. Further, $C_{\mathbf{H}^\ast}(s)$ is connected since $\mathbf{H}$ is of adjoint type, so $H$ has a non-principal real $2$-block $B$ by Proposition \ref{prop:main}(a). 
    
    Now, the simple group $S$ is isomorphic to a normal subgroup of $H$ with index dividing 3, and we identify $S$ with this subgroup. Here $B$ cannot lie above the principal block $B_0(S)$ of $S$, as then by Lemma \ref{lem:blockscoveringprincderived}, $\Irr(B)=\Irr(B_0(H))\cdot \beta$ for some $\beta\in\Irr(H/S)$ of order $3$, where $B_0(H)$ is the principal block of $H$, contradicting that $B$ is real. Then $B$ lies above either one or three non-principal blocks of $S$, and since $B$ must also lie above the complex conjugates, at least one of these blocks of $S$ is real. Then $\OO[S]$ contains a real non-principal $2$-block. But this must be dominated by a real non-principal $2$-block of $\OO[G]$ using Lemma \ref{lem:domination}, completing the proof.
\end{proof}

\begin{lemma}\label{lem:e7e8}
	Let $\mathbf{G}$ be a simply connected simple algebraic group 
    and let $F$ be a Frobenius endomorphism of $\mathbf{G}$ such that $\mathbf{G}^F\in\{\type{E}_7(q)_{\text{sc}}, \type{E}_8(q)\}$. Then $\OO[\bG^F]$ and $\OO[\bG^F/Z(\bG^F)]$ each possess a non-principal real $2$-block.
\end{lemma}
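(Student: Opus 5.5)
The plan is to mirror the proof of Lemma \ref{lem:f4e6}, handling $\type{E}_8(q)$ directly and $\type{E}_7(q)_{\text{sc}}$ by passing through the adjoint group.

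For $\bG^F=\type{E}_8(q)$ the situation is simplest. The pair $(\bG,F)$ is self-dual, $Z(\bG)$ is trivial, and $\bG^F=\bG^F/Z(\bG^F)$; moreover every semisimple centralizer in $\bG^\ast$ is connected. Since $\type{E}_8(q)$ contains a subgroup isomorphic to $\Sp_6(q)$ (for instance inside a subsystem $\type{F}_4(q)$ or $\type{E}_7$-Levi), Lemma \ref{lem:typec} supplies a nontrivial real semisimple element $s$ of odd order in $\bG^{\ast F^\ast}$. By connectedness, $s$ satisfies condition (\ref{condA}), and Proposition \ref{prop:main}(a) gives a non-principal real $2$-block.

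For $G=\bG^F=\type{E}_7(q)_{\text{sc}}$, set $S=G/Z(G)$, which has $|Z(G)|=(2,q-1)=2$. Let $\mathbf{H}$ be the adjoint group of type $\type{E}_7$ and $H=\mathbf{H}^F$, so that $H^\ast=G$.
\begin{enumerate}
\item $G$ contains a copy of $\type{F}_4(q)$, hence of $\Sp_6(q)$, and so there is a nontrivial real semisimple $2'$-element $s\in H^\ast$.
\item Since $\mathbf{H}^\ast$ is simply connected, $C_{\mathbf{H}^\ast}(s)$ is connected, so $s$ satisfies (\ref{condA}). Proposition \ref{prop:main}(a) then gives a non-principal real $2$-block $B$ of $H$.
\item $S$ embeds as a normal subgroup of index $2$ in $H$.
\end{enumerate}

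The main obstacle is the final descent from $H$ to $S$. In Lemma \ref{lem:f4e6} the index was $3$, so a block covering $B_0(S)$ differs from $B_0(H)$ by a non-real linear character. Here the index is $2$, and the linear character of order $2$ is real, so that argument fails. In fact, since $H/S$ is a $2$-group, $B_0(H)$ is the unique block covering $B_0(S)$, because the order-$2$ linear character lies in $B_0(H)$. Thus $B\neq B_0(H)$ covers only non-principal blocks of $S$. These form a single $H$-orbit of size $1$ or $2$, and this orbit is stable under complex conjugation because $B$ is real.
\begin{itemize}
\item If the orbit has size $1$, the covered block is real and we are done.
\item If it has size $2$, complex conjugation might swap the two blocks, which is exactly the difficulty.
\end{itemize}

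To avoid the size-$2$ case, I would instead argue through Proposition \ref{prop:main}(b),(c), or directly with a semisimple character. The semisimple character $\chi_s$ is real, and its restriction to $S$ is irreducible whenever $s$ is not $G$-conjugate to $sz$ with $z\in Z(G)$ of order $2$. This holds because $s$ has odd order, so that $|sz|=2|s|\neq|s|$, i.e.\ condition (\ref{condB}) holds. The restriction $\Res_S^H\chi_s$ is then a real irreducible character, and as above its block is non-principal. This gives a real non-principal $2$-block of $S$.

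Finally, Lemma \ref{lem:domination} with $N=Z(G)$ central shows that this block of $S=G/Z(G)$ is dominated by a block of $G$. That block contains the inflation of a real character, so it is real, and it is non-principal because the principal block of $G$ dominates only $B_0(S)$. This completes the argument for $\OO[\bG^F]$ as well.
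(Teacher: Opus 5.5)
Your proof is correct, but it uses the dual pair in the opposite direction from the paper.

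\textbf{The paper's route.} The paper works directly with the simply connected group $G=\bG^F$, whose dual $G^\ast$ is $\type{E}_7(q)_{\text{ad}}$, resp.\ $\type{E}_8(q)$. It gets a real odd-order $s\in G^\ast$ from Singh's theorem that every semisimple element of $G^\ast$ is real. Connectedness of $C_{\bG^\ast}(s)$ comes from Lemma \ref{lem:centconn}, since $s$ has odd order and the isogeny kernel has order at most $2$. Conditions (\ref{condB}) and (\ref{condC}) are then immediate: $Z(G^\ast)=1$, and $s$ lies in the derived subgroup, which has index at most $2$. Parts (a) and (c) of Proposition \ref{prop:main} then give $G$ and $G/Z(G)$ in one stroke, uniformly for $\type{E}_7$ and $\type{E}_8$.

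\textbf{Your route.} You instead pass to the adjoint group $H$, whose dual is simply connected, as in Lemma \ref{lem:f4e6}. This has two benefits:
\begin{itemize}
\item Connectedness of the centralizer is automatic.
\item The real $2'$-element comes from $\Sp_6(q)\le\type{F}_4(q)\le\type{E}_6(q)_{\text{sc}}\le\type{E}_7(q)_{\text{sc}}$ via Lemma \ref{lem:typec}. So you need neither Singh's theorem nor the central-kernel result used in part (c).
\end{itemize}
You correctly saw that the index-$3$ orbit argument of Lemma \ref{lem:f4e6} breaks at index $2$. Your repair works: condition (\ref{condB}) makes $\Res^H_S\chi_s$ irreducible, hence $H$-stable, and this is exactly Proposition \ref{prop:main}(b) applied to $H$ with $H_{\text{der}}\cong S$. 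The non-principality argument is also sound: $B_0(H)$ is the only block covering $B_0(S)$, because the order-$2$ linear character lies in $B_0(H)$. The final lift to $\OO[G]$ via Lemma \ref{lem:domination} is the same step the paper uses in the proof of Theorem \ref{thm:real2blocks}.

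\textbf{Trade-off.} Your version costs an extra descent-and-lift and a separate treatment of $\type{E}_8$. The paper's version costs the reality theorem for semisimple elements of the adjoint group.

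One small slip: $\type{F}_4$ is not a subsystem subgroup of $\type{E}_8$. It is still a subgroup, for example inside an $\type{E}_7$-Levi, so the argument is unaffected.
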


\begin{proof}
Let $(\mathbf{G}^\ast,F^\ast)$ be dual to $(\mathbf{G},F)$. Then $\mathbf{G}^\ast$ is a simple algebraic group of adjoint type with root datum of Cartan type $\type{E}_7$, respectively $\type{E}_8$, and $\mathbf{G}^{\ast F^\ast}=\type{E}_7(q)_{\text{ad}}$, respectively $\type{E}_8(q)$. (Note that $\type{E}_8(q)$ is self-dual.) By \cite[Thm.~2.3.1]{Singh_2008}, every semisimple element in $\mathbf{G}^{\ast F^\ast}$ is real. Therefore, there must exist a nontrivial semisimple element $s\in\mathbf{G}^{\ast F^\ast}$ that is real and has odd order. Now, the kernel of the natural isogeny $\pi:\mathbf{G}\onto\mathbf{G}^\ast$ is cyclic of order dividing 2 (c.f. \cite[Rem.~1.5.13]{Geck_2020}). Hence, Lemma \ref{lem:centconn} implies that the centralizer $C_{\mathbf{G}^\ast}(s)$ is connected. We see then that the element $s$ satisfies condition (\ref{condA}) of Definition \ref{defn:conditions}. Since $Z(\mathbf{G}^{\ast F^\ast})$ is trivial and since the derived subgroup of $\type{E}_7(q)_{\text{ad}}$ has index 2 in $\type{E}_7(q)_{\text{ad}}$ and $\type{E}_8(q)$ is simple, the element $s$ clearly satisfies conditions (\ref{condB}) and (\ref{condC}). Noting that $\type{E}_7(q)_{\text{sc}}$ is perfect and $|Z(\type{E}_7(q)_{\text{sc}})|=2$, Proposition \ref{prop:main} allows us to conclude that both $\OO[\bG^F]$ and $\OO[\bG^F/Z(\bG^F)]$ possess a non-principal real $2$-block.
\end{proof}

\section{Proof of the Main Results }\label{sec:mainproofs}

Finally, we end with the proofs of Theorem \ref{thm:real2blocks} and Corollary \ref{cor:quadraticselfdual}.

\begin{proof}[Proof of Theorem \ref{thm:real2blocks}]
If $G$ is simple, we may assume by Lemma \ref{lem:sporalt} that $G=\bG^F/Z(\bG^F)$ for $\bG$ a simple, simply connected algebraic group and $F\colon \bG\rightarrow\bG$ a Steinberg endomorphism. Then the result follows from Lemmas \ref{lem:defcharRee}, \ref{lem:g23d4}, \ref{lem:f4e6}, and \ref{lem:e7e8} together with Corollaries \ref{cor:typea}, \ref{cor:typebc}, and \ref{cor:typed}.

So, assume that $Z(G)\neq 1$.
If $\OO[G/Z(G)]$ has a non-principal real $2$-block $b$, then so does $\OO[G]$ using Lemma \ref{lem:domination}, by considering a block dominating $b$. It suffices to see that the covering groups of the exceptions listed in the theorem do not have non-principal real $2$-blocks. 

The groups $\PSL_3(3)$, $\PSU_3(3)$, $M_{11}$, $M_{23}$, and $M_{24}$ each have trivial Schur multiplier, so it further suffices to check the covering groups of $M_{22}$, which has a cyclic Schur multiplier of size $12$. Again using \cite{GAP}, we see that these covers have no non-principal real $2$-block, and this completes the proof. 
\end{proof}
\begin{proof}[Proof of Corollary \ref{cor:quadraticselfdual}]
First, if $G$ is not one of the simple groups excluded in Theorem \ref{thm:real2blocks}, then $G$ has a real non-principal $2$-block $B$. By \cite[Prop 1.4]{GowWillems93}, $B$ then contains a self-dual Brauer character $\psi$, which by \cite[Lem.~17]{GowMurray} must be of quadratic type. 

Now suppose that $G$ is one of the simple groups excluded in Theorem \ref{thm:real2blocks}. The statement for sporadic groups is shown in \cite[Sec.~5]{GowMurray}. So, we are left to consider the groups $\PSL_3(\epsilon 3)$. Here we argue similarly to the case of sporadic groups in \cite[Sec.~5]{GowMurray} - namely, we see in GAP \cite{GAP} that there is an orthogonal character (of degree 12 when $\epsilon =1$, respectively degree 14 when $\epsilon=-1$) that restricts to a self-dual irreducible Brauer character when considered on $2$-regular elements.
\end{proof}


\end{document}